\documentclass[11pt,twoside,reqno]{amsart}

\usepackage[english]{babel}
\usepackage[a4paper,centering]{geometry}
\usepackage[T1]{fontenc}
\usepackage[utf8]{inputenc}
\usepackage{lmodern}
\usepackage[verbose=silent]{microtype}

\usepackage{amsmath}
\usepackage{amssymb}

\usepackage{graphicx}
\usepackage{caption}
\DeclareCaptionLabelSeparator{period}{. }
\usepackage{esint}
\usepackage{mathrsfs}
\usepackage{tikz}

\PassOptionsToPackage{hyphens}{url}
\usepackage{csquotes}
\usepackage[backend=biber, style=numeric-comp, giveninits=true, sorting=nyt, doi=false, url=false, isbn=false, maxbibnames=99, defernumbers=true, labeldateparts=true]{biblatex}
\renewbibmacro{in:}{}
\DeclareFieldFormat[article,unpublished,incollection,inproceedings,book]{title}{\emph{#1}\isdot}
\DeclareFieldFormat[article]{volume}{\textrm{\mkbibbold{#1}}}
\DeclareFieldFormat[article,incollection,inproceedings,book]{number}{\textrm{\bibstring{number}\nobreakspace#1}}
\DeclareFieldFormat[article,incollection,inproceedings,book]{pages}{\textrm{#1}}
\DeclareFieldFormat[article]{eid}{\textrm{#1}}
\DeclareFieldFormat[article]{journaltitle}{\textrm{#1}}
\DefineBibliographyStrings{english}{number = {no\adddot}}
\renewbibmacro*{volume+number+eid}{%
  \iffieldundef{volume}{}
    {\printfield{volume}\setunit{\addspace}}%
  \csname iffieldequalstr\endcsname{journaltitle}{Int. Math. Res. Not. IMRN}
    {\printdate}
    {\printtext[parens]{\printdate}}%
  \setunit{\addcomma\space}%
  \printfield{number}%
  \setunit{\addcomma\space}%
  \printfield{eid}%
  \setunit{\addcomma\space}%
  \printfield{pages}}
\renewbibmacro*{journal+issuetitle}{%
  \usebibmacro{journal}%
  \setunit*{%
    \csname iffieldequalstr\endcsname{journaltitle}{Int. Math. Res. Not. IMRN}
      {\addspace}
      {\addcomma\space}}%
  \iffieldundef{series}{}
    {\newunit\printfield{series}\setunit{\addspace}}%
  \usebibmacro{volume+number+eid}}
\renewbibmacro*{issue+date}{}
\DeclareBibliographyDriver{article}{%
  \usebibmacro{bibindex}%
  \usebibmacro{begentry}%
  \usebibmacro{author/translator+others}%
  \setunit{\addperiod\space}%
  \usebibmacro{title}%
  \setunit{\addperiod\space}%
  \printlist{language}%
  \newunit\newblock
  \usebibmacro{byauthor}%
  \newunit\newblock
  \usebibmacro{bytranslator+others}%
  \newunit\newblock
  \printfield{version}%
  \newunit\newblock
  \usebibmacro{journal+issuetitle}%
  \newunit\newblock
  \usebibmacro{byeditor+others}%
  \newunit\newblock
  \printfield{note}%
  \newunit\newblock
  \iftoggle{bbx:isbn}{\printfield{issn}}{}%
  \newunit\newblock
  \usebibmacro{doi+eprint+url}%
  \newunit\newblock
  \usebibmacro{addendum+pubstate}%
  \setunit{\bibpagerefpunct}\newblock
  \usebibmacro{pageref}%
  \newunit\newblock
  \iftoggle{bbx:related}{\usebibmacro{related:init}\usebibmacro{related}}{}%
  \usebibmacro{finentry}}
\DeclareBibliographyDriver{unpublished}{%
  \usebibmacro{bibindex}%
  \usebibmacro{begentry}%
  \usebibmacro{author/translator+others}%
  \setunit{\addperiod\space}%
  \usebibmacro{title}%
  \setunit{\addperiod\space}%
  \printfield{note}%
  \setunit{\addcomma\space}%
  \printfield{year}%
  \setunit{\addperiod}%
  \usebibmacro{finentry}}

\usepackage{hyperref}
\hypersetup{
  colorlinks=true,
  linkcolor=black,
  anchorcolor=black,
  citecolor=black,
  filecolor=black,
  menucolor=black,
  runcolor=black,
  urlcolor=black,
  pdfencoding=auto
}
\usepackage{aliascnt}
\usepackage[compress,capitalize,noabbrev,nameinlink]{cleveref}

\theoremstyle{plain}
\newtheorem{theorem}{Theorem}[section]
\newaliascnt{proposition}{theorem}
\newtheorem{proposition}[proposition]{Proposition}
\aliascntresetthe{proposition}
\newaliascnt{lemma}{theorem}
\newtheorem{lemma}[lemma]{Lemma}
\aliascntresetthe{lemma}
\newaliascnt{conjecture}{theorem}
\newtheorem{conjecture}[conjecture]{Conjecture}
\aliascntresetthe{conjecture}
\theoremstyle{remark}
\newtheorem*{declaration}{Declaration}

\newcommand{\pairconjunction}{~and~}
\newcommand{\midconjunction}{,~}
\newcommand{\lastconjunction}{,~and~}
\crefname{equation}{}{}
\Crefname{equation}{}{}
\crefname{theorem}{Theorem}{Theorems}
\Crefname{theorem}{Theorem}{Theorems}
\crefname{proposition}{Proposition}{Propositions}
\Crefname{proposition}{Proposition}{Propositions}
\crefname{lemma}{Lemma}{Lemmas}
\Crefname{lemma}{Lemma}{Lemmas}
\crefname{conjecture}{Conjecture}{Conjectures}
\Crefname{conjecture}{Conjecture}{Conjectures}
\crefname{figure}{Figure}{Figures}
\Crefname{figure}{Figure}{Figures}
\crefname{section}{Section}{Sections}
\Crefname{section}{Section}{Sections}
\numberwithin{equation}{section}
\crefformat{equation}{(#2#1#3)}
\crefformat{theorem}{#2Theorem~#1#3}
\crefformat{conjecture}{#2Conjecture~#1#3}
\crefformat{lemma}{#2Lemma~#1#3}
\crefformat{proposition}{#2Proposition~#1#3}
\crefformat{figure}{#2Figure~#1#3}
\crefformat{section}{#2Section~#1#3}
\Crefformat{equation}{(#2#1#3)}
\Crefformat{theorem}{#2Theorem~#1#3}
\Crefformat{conjecture}{#2Conjecture~#1#3}
\Crefformat{lemma}{#2Lemma~#1#3}
\Crefformat{proposition}{#2Proposition~#1#3}
\Crefformat{figure}{#2Figure~#1#3}
\Crefformat{section}{#2Section~#1#3}
\crefmultiformat{equation}{(#2#1#3)}{\pairconjunction(#2#1#3)}{\midconjunction(#2#1#3)}{\lastconjunction(#2#1#3)}
\crefmultiformat{theorem}{Theorems~#2#1#3}{\pairconjunction#2#1#3}{\midconjunction#2#1#3}{\lastconjunction#2#1#3}
\crefmultiformat{conjecture}{Conjectures~#2#1#3}{\pairconjunction#2#1#3}{\midconjunction#2#1#3}{\lastconjunction#2#1#3}
\crefmultiformat{lemma}{Lemmas~#2#1#3}{\pairconjunction#2#1#3}{\midconjunction#2#1#3}{\lastconjunction#2#1#3}
\crefmultiformat{proposition}{Propositions~#2#1#3}{\pairconjunction#2#1#3}{\midconjunction#2#1#3}{\lastconjunction#2#1#3}
\crefmultiformat{figure}{Figures~#2#1#3}{\pairconjunction#2#1#3}{\midconjunction#2#1#3}{\lastconjunction#2#1#3}
\crefmultiformat{section}{Sections~#2#1#3}{\pairconjunction#2#1#3}{\midconjunction#2#1#3}{\lastconjunction#2#1#3}
\Crefmultiformat{equation}{(#2#1#3)}{\pairconjunction(#2#1#3)}{\midconjunction(#2#1#3)}{\lastconjunction(#2#1#3)}
\Crefmultiformat{theorem}{Theorems~#2#1#3}{\pairconjunction#2#1#3}{\midconjunction#2#1#3}{\lastconjunction#2#1#3}
\Crefmultiformat{conjecture}{Conjectures~#2#1#3}{\pairconjunction#2#1#3}{\midconjunction#2#1#3}{\lastconjunction#2#1#3}
\Crefmultiformat{lemma}{Lemmas~#2#1#3}{\pairconjunction#2#1#3}{\midconjunction#2#1#3}{\lastconjunction#2#1#3}
\Crefmultiformat{proposition}{Propositions~#2#1#3}{\pairconjunction#2#1#3}{\midconjunction#2#1#3}{\lastconjunction#2#1#3}
\Crefmultiformat{figure}{Figures~#2#1#3}{\pairconjunction#2#1#3}{\midconjunction#2#1#3}{\lastconjunction#2#1#3}
\Crefmultiformat{section}{Sections~#2#1#3}{\pairconjunction#2#1#3}{\midconjunction#2#1#3}{\lastconjunction#2#1#3}

\usepackage{xstring}
\makeatletter
\def\setrangeformatparen#1{%
  \kaenmaki@setrangeformatparen\crefrangeformat{#1}%
  \kaenmaki@setrangeformatparen\Crefrangeformat{#1}%
}
\def\kaenmaki@setrangeformatparen#1#2{%
  #1{#2}{%
    \StrBefore{##1}{.}[\seca]%
    \StrBefore{##2}{.}[\secb]%
    \StrBehind{##1}{.}[\numa]%
    \StrBehind{##2}{.}[\numb]%
    \IfInteger{\seca}{%
      \IfInteger{\secb}{%
        \IfInteger{\numa}{%
          \IfInteger{\numb}{%
            \ifnum\seca=\secb\relax
              \ifnum\numb=\numexpr\numa +1 \relax
                (##3##1##4)\pairconjunction(##5##2##6)
              \else
                (##3##1##4)--(##5##2##6)
              \fi
            \else
              (##3##1##4)--(##5##2##6)
            \fi
          }{(##3##1##4)--(##5##2##6)}%
        }{(##3##1##4)--(##5##2##6)}%
      }{(##3##1##4)--(##5##2##6)}%
    }{(##3##1##4)--(##5##2##6)}%
  }%
}
\def\setrangeformatplain#1#2{%
  \kaenmaki@setrangeformatplain\crefrangeformat{#1}{#2}%
  \kaenmaki@setrangeformatplain\Crefrangeformat{#1}{#2}%
}
\def\kaenmaki@setrangeformatplain#1#2#3{%
  #1{#2}{%
    \StrBefore{##1}{.}[\seca]%
    \StrBefore{##2}{.}[\secb]%
    \StrBehind{##1}{.}[\numa]%
    \StrBehind{##2}{.}[\numb]%
    \IfInteger{\seca}{%
      \IfInteger{\secb}{%
        \IfInteger{\numa}{%
          \IfInteger{\numb}{%
            \ifnum\seca=\secb\relax
              \ifnum\numb=\numexpr\numa +1 \relax
                #3##3##1##4\pairconjunction##5##2##6%
              \else
                #3##3##1##4--##5##2##6%
              \fi
            \else
              #3##3##1##4--##5##2##6%
            \fi
          }{#3##3##1##4--##5##2##6}%
        }{#3##3##1##4--##5##2##6}%
      }{#3##3##1##4--##5##2##6}%
    }{#3##3##1##4--##5##2##6}%
  }%
}
\makeatother
\setrangeformatparen{equation}
\setrangeformatplain{theorem}{Theorems~}
\setrangeformatplain{conjecture}{Conjectures~}
\setrangeformatplain{lemma}{Lemmas~}
\setrangeformatplain{proposition}{Propositions~}
\setrangeformatplain{figure}{Figures~}
\setrangeformatplain{section}{Sections~}

\newcommand{\C}{\mathbb{C}}
\newcommand{\N}{\mathbb{N}}
\newcommand{\Q}{\mathbb{Q}}
\newcommand{\R}{\mathbb{R}}
\newcommand{\Z}{\mathbb{Z}}
\newcommand{\eps}{\varepsilon}
\renewcommand{\ge}{\geqslant}
\renewcommand{\le}{\leqslant}

\DeclareMathOperator{\supp}{supp}
\DeclareMathOperator{\rad}{rad}
\DeclareMathOperator{\den}{den}

\allowdisplaybreaks

\begin{document}

\title{Rational points of exact period three for the quadratic family}

\author{Antti K\"aenm\"aki}
\address[Antti K\"aenm\"aki]
{Department of Physics and Mathematics \\
  University of Eastern Finland \\
  P.O.\ Box 111 \\
  FI-80101 Joensuu \\
  Finland}
\email{antti@kaenmaki.net}

\author{Pekka Kosunen}
\address[Pekka Kosunen]
{Department of Physics and Mathematics \\
  University of Eastern Finland \\
  P.O.\ Box 111 \\
  FI-80101 Joensuu \\
  Finland}
\email{pekka.kosunen@uef.fi}

\subjclass[2010]{Primary 37P35; Secondary 11D61, 11G30}
\keywords{arithmetic dynamics, quadratic polynomial, periodic points, $S$-unit equations, ABC conjecture}
\date{\today}

\begin{abstract}
  For the quadratic family $f_c(x) = x^2 + c$ over $\Q$, we isolate a general $S$-unit and ABC template for rational periodic cycles: under the ABC conjecture, any rational quantity built from the consecutive-point sums of a rational period-$n$ cycle whose value and shift-by-$1$ are both $S$-units (with $S$ the support of the denominator of $c$) has logarithmic height at most $(1+\eps)\sum_{p \in S} \log p + O_\eps(1)$. Specializing to period $3$ in the change of coordinates $u = x_0 + x_1$ and $v = x_0 + x_2$, the period-$3$ condition reduces to $v = -1/(1+u)$, forcing the parameter $t = u$ to satisfy $t, t+1 \in \Z[S^{-1}]^\times$; the template then yields, under ABC, $\max\{|a|,|b|,|a+b|\} \le C_\eps \rad(d)^{1+\eps}$ for $t = a/b$ in lowest terms. We also characterize the period-$3$ locus on the $(u,v)$-plane intrinsically, isolating the ghost curve. Extending the height bound to higher periods remains an open structural question.
\end{abstract}

\maketitle

\section{Introduction}

The quadratic family
\begin{equation*}
  f_c(x) = x^2 + c
\end{equation*}
with parameter $c \in \Q$ is a central object in arithmetic dynamics; see Silverman \cite{silverman2007} for a comprehensive treatment. A central problem is to determine which exact periods occur for rational points and, for each occurring period, to describe all rational cycles. In this paper, we focus on exact period $3$, where rational cycles exist and admit explicit parametrizations. We ask what arithmetic restrictions on the period parameter are forced by the denominator of $c$.

For low periods, the picture is well understood. Fixed points of $f_c$ satisfy a quadratic equation, and $2$-cycles are parametrized by a rational curve, so both cases are elementary. Morton \cite[Theorem~4]{morton1998} proved that no rational $4$-cycle exists, Flynn--Poonen--Schaefer \cite[Theorem~1]{flynn-poonen-schaefer1997} excluded period $5$, and Stoll \cite[Theorem~7]{stoll2008} conditionally excluded period $6$, assuming the Birch--Swinnerton-Dyer conjecture. These results support the Flynn--Poonen--Schaefer conjecture, which asserts that for every $c \in \Q$, the map $f_c$ has no rational cycle of exact period $n \ge 4$; see \cite[Conjecture~2]{flynn-poonen-schaefer1997} for the original formulation and Poonen \cite[Theorems~1 and 3]{poonen1998} for a refined version that also predicts the full preperiodic structure.

Period $3$ is the largest period expected to occur, and rational $3$-cycles exist for suitable parameters. The period-$3$ dynatomic curve, the curve whose points are the pairs $(x,c)$ with $x$ of exact period $3$ for $f_c$, has genus $0$, meaning that its points can be described by a single rational parameter, so there is a complete one-parameter family of rational $3$-cycles; explicit formulas appear in Poonen \cite[Theorem~1(3)]{poonen1998} and are recorded by Fu--Stoll \cite[Theorem~2.1(1)]{fu-stoll2024}. The arithmetic of these cycles has been studied in detail by Morton--Raianu \cite[Theorems~2(a) and 4]{mortonraianu2022}, who computed the exact denominator of the period parameter and used the resulting structure to prove a minimality property for $c = -29/16$. A result of Walde--Russo \cite{walde-russo1994}, refined by Eliahou--Fares \cite[Proposition~2.1]{eliahou-fares2022}, shows that the denominator of any periodic point is determined by the denominator of $c$. The question we address is what additional structure this denominator rigidity produces when specialized to period $3$, and what quantitative consequences it has.

The key technical device is a change of coordinates
\begin{equation*}
  u = x_0 + x_1 \qquad \text{and} \qquad v = x_0 + x_2
\end{equation*}
introduced for quadratic iteration by Erkama \cite[\S 3--4]{erkama2006}, who used $u = x_0+x_1$ to uniformize the period-$3$ curve and $v = x_0+x_2$ to uniformize the period-$4$ curve; Kosunen \cite[\S2]{kosunen2017} later combined the two into a single two-dimensional $(u,v)$-plane carrying an explicit quadratic iteration map. In the $(u,v)$-model, the period-$3$ condition collapses to the single relation $v = -1/(1+u)$, in place of the degree-$6$ dynatomic polynomial in the original variables. This simplification makes the $S$-unit structure transparent. By periodicity, the product of consecutive-point sums equals $1$. By the denominator theorem of Walde--Russo, the denominator $\den(c)$ of $c$ is a perfect square $d^2$, and every prime factor of $t = x_0+x_1$ and of $t+1$ lies in the set $S$ of primes dividing $d$. Applying the ABC conjecture, \cref{conj:abc}, to the resulting coprime triples then bounds the logarithmic height of $t$, namely $h(t) = \log\max\{|a|,|b|\}$ for $t = a/b$ in lowest terms, in terms of the primes in $S$; this bound is made precise in \cref{thm:period3}, and the same $S$-unit structure also yields unconditional bounds, recorded in \cref{prop:period3-evertse-count,prop:period3-unconditional-height}.

Our contributions are threefold. First, \cref{thm:abc-template} isolates a general $S$-unit and ABC template for periodic orbits of $f_c$, valid for every exact period $n \ge 2$: under \cref{conj:abc}, any rational quantity $\tau_n$ built from the consecutive-point sums $u_0,\ldots,u_{n-1}$ of a rational period-$n$ cycle, for which both $\tau_n$ and $\tau_n+1$ are $S$-units, satisfies $h(\tau_n) \le (1+\eps)\sum_{p\in S}\log p + O_\eps(1)$. The ABC conjecture has been applied in arithmetic dynamics before, for instance to primitive prime divisors in orbits by Gratton, Nguyen, and Tucker \cite[Theorem~1.1]{gratton-nguyen-tucker2013}, but we are not aware of a prior application bounding the height of rational periodic cycles of $f_c$. Second, specializing the template to period $3$ with $\tau_3 = t$ gives our main quantitative result \cref{thm:period3}, tying the complexity of a rational period-$3$ cycle to the prime factorization of $\den(c)$. Third, \cref{lem:period3-uv-direct} characterizes the period-$3$ locus intrinsically, within the induced $(u,v)$-dynamics and with no reference to $f_c$: a rational point $(u,v) \in \Q^2$ has exact period $3$ if and only if $u \in \Q \setminus \{0,-1\}$ and $v = -1/(1+u)$. Its proof isolates the \emph{ghost curve}, the second algebraic component of $u_0 u_1 u_2 = 1$ that satisfies the algebra but carries no rational period-$3$ orbit. 

The period-$3$ specialization rests on an unconditional $S$-unit constraint: \cref{prop:period3-rational-sunit} recasts the denominator rigidity of Eliahou--Fares \cite[Corollary~2.6]{eliahou-fares2022} and Morton--Raianu \cite[Theorem~2(a)]{mortonraianu2022} as the $S$-unit equation $t/(t+1)+1/(t+1) = 1$, which is what makes \cref{conj:abc} applicable. Feeding the same constraint into classical Diophantine bounds in place of \cref{conj:abc} yields two unconditional companions: a count of admissible parameters $c$ via Evertse's $S$-unit bound \cite[Theorem~1]{evertse1984} in \cref{prop:period3-evertse-count}, and a weaker height bound via the Stewart--Yu form of ABC \cite[Theorem~1]{stewart-yu2001} in \cref{prop:period3-unconditional-height}.

The genuinely new input is this conditional height bound, its template form, and the intrinsic characterization isolating the ghost curve. The denominator rigidity and $S$-unit structure that feed the template go back to Walde--Russo, Eliahou--Fares, and Morton--Raianu, while the $(u,v)$-model, the period-$3$ relation, and the parametrization go back to Erkama, Kosunen, and Poonen. For these classical ingredients we give shorter, self-contained proofs, internal to the $(u,v)$-model and valid over an arbitrary field of characteristic zero: the telescoping identity of Erkama in \cref{lem:telescoping}, the period-$3$ relation $v = -1/(1+u)$ in \cref{lem:period3-uv}, and the rational period-$3$ parametrization in \cref{prop:period3-classification}.

\begin{declaration}
  In preparing this paper the authors used large language models for language editing throughout, and in some cases to draft English prose from the authors' own descriptions of the intended mathematical content. The models were also used as an aid in checking proofs for correctness and to help locate references. All results and arguments are the authors' own and have been verified independently of the models, and every reference has been checked by the authors against the original source. The authors take full responsibility for the content of this paper.
\end{declaration}

\section{Main theorem and prior input} \label{sec:main}

In this section, we state the main theorem in the original $(x,c)$-coordinates; the detailed ingredients are proved in later sections. We first fix notation. For $c \in \Q$, write $\den(c)$ for the denominator of $c$ in lowest terms. For each nonzero integer $d$, write $\supp(d)$ for the set of prime divisors of $d$. For a finite set $S$ of primes, write $\Z[S^{-1}]^\times$ for the group of nonzero rationals whose numerator and denominator are divisible only by primes in $S$; its elements are called \emph{$S$-units}. For each nonzero integer $d$, write $\rad(d) = \prod_{p \in \supp(d)} p$ for the product of the distinct prime divisors of $d$. For a rational number $t = a/b$ in lowest terms, write $h(t) = \log\max\{|a|,|b|\}$ for the logarithmic height of $t$. We next recall the ABC conjecture over $\Q$, formulated independently by Masser \cite{masser1985} and Oesterl\'e \cite{oesterle1988}.

\begin{conjecture} \label{conj:abc}
  For every $\eps > 0$ there exists $C_\eps > 0$ such that $|c| \le C_\eps \rad(abc)^{1+\eps}$ for all coprime integers $a,b,c$ with $a+b = c$.
\end{conjecture}

A proof of \cref{conj:abc} has been announced by Mochizuki \cite{mochizuki2021iut4}, but it remains contested; see Scholze and Stix \cite{scholze-stix2018}. We treat \cref{conj:abc} as a conjecture throughout.

The main theorem is the period-$3$ specialization of \cref{thm:abc-template}.

\begin{theorem} \label{thm:period3}
  Assume \cref{conj:abc}. Let $c \in \Q$ admit a rational point $x_0$ of exact period $3$ for $f_c(x) = x^2 + c$, write $\den(c) = d^2$ with $d \in \N$, set $S = \supp(d)$, let $x_1 = f_c(x_0)$, and set $t = x_0+x_1$. If $t = a/b$ in lowest terms, then for every $\eps > 0$ there is $C_\eps > 0$ such that
  \begin{equation} \label{eq:period3-abc}
    \max\{|a|,|b|,|a+b|\} \le C_\eps \rad(d)^{1+\eps}.
  \end{equation}
  Equivalently,
  \begin{equation} \label{eq:abc-log-height-bound-t}
    h(t) \le (1+\eps)\sum_{p\in S}\log p + O_\eps(1).
  \end{equation}
\end{theorem}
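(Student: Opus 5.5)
The plan is to reduce \cref{thm:period3} to a single application of \cref{conj:abc} to the coprime triple $(a,b,a+b)$. The arithmetic input is that $t$ and $t+1$ are both $S$-units, i.e.\ $t,t+1 \in \Z[S^{-1}]^\times$. The paper announces this as \cref{prop:period3-rational-sunit}, and I would invoke it. For a self-contained route I would argue as follows, using the $(u,v)$-model.

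\emph{Step 1: the orbit sums.} Write $x_0,x_1,x_2$ for the cycle and $u_i = x_i + x_{i+1}$ (indices mod $3$). The identity
\begin{equation*}
  f_c(x_i) - f_c(x_{i+1}) = (x_i - x_{i+1})(x_i + x_{i+1})
\end{equation*}
reads $x_{i+1} - x_{i+2} = u_i (x_i - x_{i+1})$. The differences $x_i - x_{i+1}$ are nonzero because the period is exact, so multiplying around the cycle gives the telescoping relation $u_0 u_1 u_2 = 1$ (\cref{lem:telescoping}).

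\emph{Step 2: $t$ is an $S$-unit.} By Walde--Russo / Eliahou--Fares, $\den(c) = d^2$ and every periodic point has denominator exactly $d$. So $x_i = n_i/d$ with $n_i \in \Z$, and hence $u_i = m_i/d$ with $m_i = n_i + n_{i+1} \in \Z$. The relation $m_0 m_1 m_2 = d^3$ then forces every prime factor of $m_i$ to divide $d$. In particular $t = u_0$ is an $S$-unit, and it is nonzero.

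\emph{Step 3: $t+1$ is an $S$-unit.} By \cref{lem:period3-uv}, $v = x_0 + x_2 = -1/(1+t)$; in particular $t \ne -1$. Now $v = (n_0+n_2)/d$ is an $S$-unit by the same argument, since $v = u_2$. Therefore $t+1 = -1/v$ is an $S$-unit.

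\emph{Step 4: apply ABC.} Write $t = a/b$ in lowest terms. Since $t$ and $t+1 = (a+b)/b$ are $S$-units and $\gcd(a,b) = \gcd(a+b,b) = 1$, each of $a$, $b$, $a+b$ is a nonzero integer all of whose prime factors lie in $S$. The integers $a,b,a+b$ are pairwise coprime and satisfy $a + b = (a+b)$. \Cref{conj:abc} gives
\begin{equation*}
  |a+b| \le C_\eps \rad\bigl(ab(a+b)\bigr)^{1+\eps} \le C_\eps \rad(d)^{1+\eps},
\end{equation*}
because $\rad(ab(a+b))$ divides $\rad(d)$. To bound $|a|$ and $|b|$ as well, apply the conjecture to the rearranged triples $(a+b) + (-b) = a$ and $(a+b) + (-a) = b$, adjusting signs as needed. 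These triples have the same radical, so all three quantities are bounded by $C_\eps \rad(d)^{1+\eps}$, which is \cref{eq:period3-abc}. Taking logarithms, and using $h(t) = \log\max\{|a|,|b|\}$ and $\log\rad(d) = \sum_{p\in S}\log p$, gives \cref{eq:abc-log-height-bound-t}. The converse direction of the equivalence follows from $|a+b| \le 2\max\{|a|,|b|\}$, absorbed into $O_\eps(1)$.

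\emph{Main obstacle.} The ABC step is mechanical. The real content is Steps 2 and 3: obtaining the $S$-unit property of both $t$ and $t+1$. This needs the denominator rigidity (periodic points have denominator exactly $d$) together with the exact relation $v = -1/(1+u)$ on the period-$3$ locus, where one must exclude the ghost component of $u_0u_1u_2 = 1$. If \cref{lem:period3-uv} is available I would simply cite it; otherwise I would derive it by eliminating $c$ from $x_1 = x_0^2 + c$, $x_2 = x_1^2 + c$, $x_0 = x_2^2 + c$ in the variables $u$ and $v$.
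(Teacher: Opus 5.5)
Your proposal is correct and matches the paper's proof: Steps 1--3 are exactly the proof of \cref{prop:period3-rational-sunit} (telescoping, denominator rigidity, then $t+1 = -1/v$ via \cref{lem:period3-uv}), and Step 4 is the proof of \cref{thm:abc-template} with $\tau_3 = t$. One minor correction to your closing remark: with your indexing $u_2 = x_2+x_0 = v$, the spurious branch that \cref{lem:period3-uv} discards is the line $v = 1$ in $(v-1)\bigl((u+1)v+1\bigr) = 0$, whereas the ghost curve only appears in \cref{lem:period3-uv-direct}, where $u_2$ is the first coordinate of $G^2(u,v)$ rather than $v$.
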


We prove \cref{thm:period3} in \cref{sec:period3-arith}, by specializing the template \cref{thm:abc-template} to the $S$-unit constraint of \cref{prop:period3-rational-sunit}. The main theorem is conditional on \cref{conj:abc}. Two unconditional companions, both proved in \cref{sec:period3-arith}, record what the same $S$-unit structure yields without it: the period parameter $t$ and its shift $t+1$ are forced to be $S$-units, exactly as in the proof of \cref{thm:period3}, and feeding this constraint into classical Diophantine bounds in place of \cref{conj:abc} gives the following two statements. The first replaces the height bound by a count.

\begin{proposition} \label{prop:period3-evertse-count}
  Let $S$ be a finite set of primes. For $c \in \Q$ such that $f_c$ admits a rational cycle of exact period $3$, write $\den(c) = d^2$ with $d \in \N$. The number of such $c$ with $\supp(d) \subseteq S$ is at most $3 \cdot 7^{2\# S + 3}$.
\end{proposition}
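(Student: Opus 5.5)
The plan is to reduce the count of parameters $c$ to a count of solutions of a two-term $S$-unit equation and then apply Evertse's bound \cite[Theorem~1]{evertse1984}. Fix $c$ as in the statement, with a rational $3$-cycle $x_0 \mapsto x_1 \mapsto x_2 \mapsto x_0$, and put $t = x_0 + x_1$. By the $S$-unit constraint (\cref{prop:period3-rational-sunit}, stated later in the paper), both $t$ and $t+1$ lie in $\Z[S'^{-1}]^\times$, where $S' = \supp(d)$. Since $S' \subseteq S$, we also have $t, t+1 \in \Z[S^{-1}]^\times$. Setting $X = t/(t+1)$ and $Y = 1/(t+1)$ gives a solution $(X,Y) \in (\Z[S^{-1}]^\times)^2$ of $X + Y = 1$, and $t = X/Y$ can be recovered from this pair. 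So the map $t \mapsto (X,Y)$ is injective from the admissible period parameters into the solution set of the unit equation.

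Next, Evertse's theorem for $K = \Q$ (degree $1$) bounds the number of solutions of $X+Y=1$ in $S$-units by $3 \cdot 7^{1 + 2s}$, where $s$ is the number of places in the set including the archimedean one. With $s = \#S + 1$ this is exactly $3\cdot 7^{2\#S+3}$, which is the constant in the statement. Hence the number of admissible values of $t$ is at most $3\cdot 7^{2\#S+3}$.

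The remaining step, and the main point needing care, is to pass from $t$ to $c$ without losing a multiplicative factor. Each $c$ with a rational $3$-cycle has three choices of starting point $x_0$ in a given cycle, and possibly more than one cycle, so several values of $t$ can correspond to the same $c$. That direction only helps, since it means #$\{c\} \le$ #$\{t\}$. What must be shown is that each $t$ determines $c$. I would use the parametrization of \cref{prop:period3-classification}: in the $(u,v)$-model, $u = t$ forces $v = -1/(1+t)$ by \cref{lem:period3-uv}. Then $x_1 - x_2 = u - v$, $x_1 + x_2 = \ldots$, and the relation $x_1^2 - x_0^2 = x_2 - x_1$ let one solve for $x_0, x_1, x_2$, and hence $c = x_1 - x_0^2$, as rational functions of $t$. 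So $c$ is a function of $t$, and the map from admissible $t$ onto admissible $c$ is surjective. This gives #$\{c\} \le 3\cdot 7^{2\#S+3}$. I expect the only real obstacle to be confirming that the excluded values $t \in \{0,-1\}$ and the ghost-curve branch are handled by the earlier lemmas; they are, since \cref{lem:period3-uv-direct} excludes them for genuine exact-period-$3$ orbits.
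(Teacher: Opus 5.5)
Your proposal is correct and follows the paper's proof essentially step for step: the $S$-unit constraint of \cref{prop:period3-rational-sunit} combined with $\supp(d) \subseteq S$, the injection $t \mapsto (t/(t+1), 1/(t+1))$ into solutions of $X+Y=1$, Evertse's bound with $m=1$ and $s=\#S+1$, and the observation that $c = c(t)$, so there are no more admissible $c$ than admissible $t$. Your closing worry is unnecessary: $t, t+1 \in \Z[S^{-1}]^\times$ already rules out $t \in \{0,-1\}$, and the lemma that handles genuine $f_c$-orbits is \cref{lem:period3-uv}, not \cref{lem:period3-uv-direct}.
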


Where \cref{thm:period3} bounds the height of a single admissible parameter $t$, \cref{prop:period3-evertse-count} bounds the number of admissible parameters $c$ outright, by a quantity depending only on $\# S$ and not on their height, through Evertse's bound \cite[Theorem~1]{evertse1984} on the number of solutions of an $S$-unit equation. The two bounds apply different Diophantine machinery to the common $S$-unit equation of \cref{prop:period3-rational-sunit}: \cref{thm:period3} invokes \cref{conj:abc} to bound a height, whereas \cref{prop:period3-evertse-count} invokes Evertse's theorem to bound a count. Neither is a consequence of the other. In particular, \cref{prop:period3-evertse-count} does not follow from \cref{thm:period3}: the height bound controls only the size of each admissible $t$, and even under \cref{conj:abc} it yields only a count depending on the actual primes in $S$, not a bound uniform in $\# S$. The second companion keeps the height-bound form of \cref{thm:period3} but removes the dependence on \cref{conj:abc}.

\begin{proposition} \label{prop:period3-unconditional-height}
  Let $c \in \Q$ admit a rational point $x_0$ of exact period $3$ for $f_c$, set $x_1 = f_c(x_0)$ and $t = x_0+x_1$, write $\den(c) = d^2$ with $d \in \N$, and set $S = \supp(d)$. Then there is a constant $C_0 > 0$ such that
  \begin{equation} \label{eq:stewart-yu-height-bound-t}
    h(t) < C_0 \biggl( \prod_{p \in S} p \biggr)^{1/3} \biggl(\sum_{p \in S} \log p \biggr)^3.
  \end{equation}
\end{proposition}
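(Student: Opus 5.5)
The plan is to run the same $S$-unit reduction that drives \cref{thm:period3} and then feed it into the Stewart--Yu unconditional form of ABC instead of \cref{conj:abc}. First I would invoke the $S$-unit constraint announced for \cref{prop:period3-rational-sunit}: for $c$ admitting a rational $3$-cycle with $\den(c) = d^2$, the parameter $t = x_0 + x_1$ satisfies $t, t+1 \in \Z[S^{-1}]^\times$ with $S = \supp(d)$. Writing $t = a/b$ in lowest terms, this says that $a$, $b$ and $a+b$ are pairwise coprime integers whose prime factors all lie in $S$. Here $t \neq 0, -1$ by the period-$3$ relation $v = -1/(1+u)$, so none of them vanishes.

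Next I would normalize signs so that we have a coprime triple of positive integers $A + B = C$ with $\{A,B,C\} = \{|a|,|b|,|a+b|\}$. One rearranges according to which of $a$, $b$, $a+b$ has the sign opposite to the other two, giving $C = \max\{|a|,|b|,|a+b|\}$. Since each of $A$, $B$, $C$ is composed only of primes in $S$, we get $\rad(ABC) \le \rad(d) = \prod_{p\in S} p$. If $S$ is empty, then $A = B = 1$ and $C = 2$, which contradicts $S$ being empty unless the triple degenerates. So the case of $S$ empty (or $S$ very small) yields only finitely many $t$, and it is absorbed into the constant $C_0$. This is the step where I would double-check that $C_0$ can be taken absolute.

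Then I would apply Stewart--Yu \cite[Theorem~1]{stewart-yu2001} to the triple. That theorem gives $\log C < \kappa\, R^{1/3} (\log R)^3$ for $R = \rad(ABC)$ and an absolute constant $\kappa$. With $R \le \prod_{p\in S} p$ and $\log R \le \sum_{p\in S}\log p$, and using monotonicity of $x \mapsto x^{1/3}(\log x)^3$ for $x \ge 3$, we obtain
\begin{equation*}
  h(t) = \log\max\{|a|,|b|\} \le \log C < C_0 \biggl(\prod_{p\in S} p\biggr)^{1/3}\biggl(\sum_{p\in S}\log p\biggr)^3 .
\end{equation*}

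The main obstacle is bookkeeping rather than any new idea. First, one must confirm the precise shape of Stewart--Yu's bound, namely the exponent $1/3$ and the power of $\log R$, since some formulations use $R^{1/3+\eps}$ or include extra $\log\log$ factors. Second, one must handle small radicals, where $\log R$ may be tiny or the monotonicity fails. For $R = 2$ the right-hand side is positive, but there are only finitely many triples with bounded radical, so enlarging $C_0$ covers these cases. I would treat $R \le R_0$ separately by finiteness and apply Stewart--Yu only for $R > R_0$.
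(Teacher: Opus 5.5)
Your proposal is correct and is essentially the paper's proof: \cref{prop:period3-rational-sunit} makes $a$, $b$, $a+b$ pairwise coprime $S$-unit integers, reordering by sign gives an ABC triple with $\rad(ab(a+b)) \le \prod_{p\in S}p$, and Stewart--Yu together with monotonicity of $R \mapsto R^{1/3}(\log R)^3$ finishes the argument. Your extra bookkeeping for small radicals is care the paper skips (at most the triple $1+1=2$, i.e.\ $t\in\{1,-2,-1/2\}$ and $c=-29/16$, can require enlarging $C_0$). One correction: the case $S=\emptyset$ is vacuous rather than absorbed into $C_0$, since there the right-hand side is $0$, and in fact one of $a$, $b$, $a+b$ is always even, so $2\in S$.
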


The bound of \cref{prop:period3-unconditional-height} is unconditional, resting on the Stewart--Yu form of the ABC inequality \cite[Theorem~1]{stewart-yu2001} in place of \cref{conj:abc}. It is correspondingly weaker than \cref{thm:period3}: under \cref{conj:abc}, \cref{thm:period3} bounds $h(t)$ by $(1+\eps)\log \prod_{p \in S} p + O_\eps(1)$, whereas \cref{prop:period3-unconditional-height} only bounds it by a positive power of $\prod_{p \in S} p$. Thus \cref{thm:period3} supersedes \cref{prop:period3-unconditional-height} whenever \cref{conj:abc} is available, and the companion records the strongest height bound known without it.

The paper is organized as follows. In \cref{sec:periodn}, we collect results valid for arbitrary period $n \ge 2$: the telescoping identity, the Eliahou--Fares denominator transfer, the resulting $S$-unit structure of consecutive-point sums, and the conditional ABC height-bound template \cref{thm:abc-template}. The denominator transfer \cref{prop:ef-transfer} goes back to Walde--Russo \cite{walde-russo1994} and was further developed by Eliahou--Fares \cite{eliahou-fares2022}, who also proved the Flynn--Poonen--Schaefer conjecture for parameters whose denominator has at most two distinct prime factors; we build on their $S$-unit conclusion to obtain \cref{thm:abc-template}. In \cref{sec:uv-model}, we introduce the $(u,v)$-model of Kosunen \cite{kosunen2017} and prove the birational dictionary, recast for an arbitrary field of characteristic zero. In \cref{sec:period3}, we specialize to period $3$. The relation $v = -1/(1+u)$ in \cref{lem:period3-uv} was derived by Erkama \cite[\S3]{erkama2006} from his period-$3$ condition $(x_0+x_2)(1+(x_0+x_1)) = -1$, and the rational parametrization \cref{prop:period3-classification} appears in equivalent form in Poonen \cite{poonen1998} and Fu--Stoll \cite{fu-stoll2024} with an independent derivation in Erkama \cite{erkama2006}; we give shorter proofs internal to the $(u,v)$-model. In \cref{sec:period3-ghost}, we then characterize the period-$3$ locus intrinsically in \cref{lem:period3-uv-direct}, isolating the ghost curve; the underlying factorization is visible in Kosunen \cite{kosunen2017} but is not extracted there as a separate result. In \cref{sec:period3-arith}, we record the unconditional cubic bound \cref{prop:fixed-z-period3-cubic} and the $S$-unit constraint \cref{prop:period3-rational-sunit}, prove \cref{thm:period3} by specializing \cref{thm:abc-template} to that constraint, and prove the unconditional companions \cref{prop:period3-evertse-count,prop:period3-unconditional-height}. \cref{prop:period3-rational-sunit} is logically equivalent to the exact-denominator formula $\den(c) = 16(mn(m+n)/2)^2$ of Morton--Raianu \cite{mortonraianu2022} for $t = m/n$ in lowest terms; the $S$-unit form is what makes \cref{conj:abc} applicable and yields the explicit height bound \cref{thm:period3}.

\section{Telescoping and $S$-unit structure} \label{sec:periodn}

Although this paper focuses on period $3$, the denominator-transfer and telescoping parts of the argument extend to all exact periods $n \ge 2$. We collect these general ingredients here. The starting point is a telescoping identity: \cref{lem:telescoping} shows that for any exact cycle of $f_c$, the product of the consecutive-point sums $u_j = x_j+x_{j+1}$ equals $1$. This identity is purely algebraic and holds over any field of characteristic zero. The denominator theorem of Walde--Russo, recorded as \cref{prop:ef-transfer}, then combines with the telescoping identity to yield \cref{prop:periodn-sunit}: each $u_j$ is an $S$-unit. Under \cref{conj:abc}, the $S$-unit structure yields a height bound for any rational quantity $\tau_n$ of the $u_j$ for which both $\tau_n$ and $\tau_n+1$ remain $S$-units; this is \cref{thm:abc-template}.

The product identity in the following lemma is due to Erkama \cite[(5.1)]{erkama2006}; we record a self-contained proof over an arbitrary field of characteristic zero. Recall that a field $K$ has characteristic $0$ if $n\cdot 1_K \neq 0$ for every integer $n \ge 1$. 

\begin{lemma} \label{lem:telescoping}
  Let $K$ be a field of characteristic $0$ and let $x_{j+1} = x_j^2+c$ with $c \in K$. If $x_0$ has exact period $n \ge 2$, then
  \begin{equation*}
    \prod_{j\in\{0,\ldots,n-1\}} (x_j+x_{j+1}) = 1.
  \end{equation*}
\end{lemma}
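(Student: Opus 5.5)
The proof rests on the factorization $x_{j+1}-x_{j+2} = (x_j^2+c)-(x_{j+1}^2+c) = (x_j-x_{j+1})(x_j+x_{j+1})$, which holds for every index $j$ and is immediate from the recursion $x_{j+1}=x_j^2+c$. In words, the difference of consecutive orbit points at the next step equals the previous difference multiplied by the consecutive-point sum. We will multiply these identities over one full period and cancel the differences, which is where exact period $n\ge 2$ enters.

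\textbf{Step 1 (nonvanishing).} Write $\delta_j = x_j - x_{j+1}$ for $j\in\{0,\ldots,n-1\}$. Since $x_0$ has exact period $n\ge 2$, the points $x_0,\ldots,x_{n-1}$ are pairwise distinct. In particular $x_j\neq x_{j+1}$ for all $j$, taking indices modulo $n$. At $j=n-1$ this uses $x_n=x_0\neq x_{n-1}$, which requires $n\ge 2$. Hence every $\delta_j\neq 0$.

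\textbf{Step 2 (product and cancellation).} Multiply the identities $\delta_{j+1}=\delta_j(x_j+x_{j+1})$ for $j=0,\ldots,n-1$. By periodicity $x_{n}=x_0$ and $x_{n+1}=x_1$, so $\delta_n=\delta_0$. The product of the left sides, $\prod_{j}\delta_{j+1}$, is then the same multiset of factors as $\prod_j\delta_j$. This gives
\begin{equation*}
  \prod_{j=0}^{n-1}\delta_j = \Bigl(\prod_{j=0}^{n-1}\delta_j\Bigr)\prod_{j=0}^{n-1}(x_j+x_{j+1}).
\end{equation*}
By Step 1 the common factor $\prod_j\delta_j$ is a nonzero element of the field $K$, so we may cancel it and obtain the claim.

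\textbf{Main obstacle.} The only delicate point is Step 1, namely ensuring the cancelled product is nonzero. The relevant hypothesis is exact period at least $2$: for a fixed point, $\delta_0=0$ and the identity can fail, for instance $2x_0\neq 1$ in general. Characteristic zero is not actually needed beyond $K$ being a field (an integral domain would suffice), but it is harmless. No further computation is required.
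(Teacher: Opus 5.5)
Your proof is correct and is essentially the paper's argument: the paper writes the same factorization as $x_j+x_{j+1} = (x_{j+1}-x_{j+2})/(x_j-x_{j+1})$ and telescopes the quotient to $(x_n-x_{n+1})/(x_0-x_1) = 1$, which is your multiply-and-cancel step written in divided form. Your remark that characteristic $0$ is not needed is also accurate, since the paper's proof does not use it either.
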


\begin{proof}
  Since $x_{k+1}-x_{\ell+1} = x_k^2-x_\ell^2 = (x_k-x_\ell)(x_k+x_\ell)$, we have
  \begin{equation} \label{eq:basic-telescope}
    x_k+x_\ell = \frac{x_{k+1}-x_{\ell+1}}{x_k-x_\ell}
  \end{equation}
  whenever $x_k \neq x_\ell$. Assume that $x_0$ has exact period $n \ge 2$. Then $x_n = x_0$ and $x_{n+1} = x_1$, and the points $x_0,\ldots,x_{n-1}$ are pairwise distinct. In particular, $x_j \neq x_{j+1}$ for every $j \in \{0,\ldots,n-1\}$. Apply \cref{eq:basic-telescope} with $(k,\ell) = (0,1), (1,2), \ldots, (n-1,n)$ and multiply. This gives
  \begin{equation*}
    \prod_{j\in\{0,\ldots,n-1\}} (x_j+x_{j+1}) = \prod_{j\in\{0,\ldots,n-1\}} \frac{x_{j+1}-x_{j+2}}{x_j-x_{j+1}} = \frac{x_n-x_{n+1}}{x_0-x_1} = 1.
  \end{equation*}
  as claimed.
\end{proof}

The second ingredient is arithmetic. The following denominator theorem, due to Walde--Russo \cite{walde-russo1994}, determines the denominator of every periodic point from the denominator of $c$.

\begin{proposition} \label{prop:ef-transfer}
  Let $c \in \Q$ and assume that $f_c$ has a rational periodic point. Then $\den(c)$ is a perfect square; writing $\den(c) = d^2$ with $d \in \N$, every rational periodic point of $f_c$ has denominator $d$.
\end{proposition}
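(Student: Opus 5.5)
We prove \cref{prop:ef-transfer} by a $p$-adic valuation argument, one prime at a time, using only that a periodic orbit is a finite set. Let $x_0 \in \Q$ be periodic for $f_c$ with exact period $n$, and write $x_{j+1} = x_j^2 + c$. Since the orbit $\{x_0,\ldots,x_{n-1}\}$ is finite, the valuations $v_p(x_j)$ for a fixed prime $p$ form a finite cyclic sequence. Throughout, $v_p$ denotes the $p$-adic valuation.

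The first step handles a prime $p$ with $v_p(c) \ge 0$. We claim that $v_p(x_j) \ge 0$ for all $j$. If some $v_p(x_j) = -k < 0$, then $v_p(x_j^2) = -2k < 0 \le v_p(c)$, so $v_p(x_{j+1}) = -2k$. Iterating gives valuations $-2^m k \to -\infty$, which contradicts periodicity. Hence no prime outside $\supp(\den(c))$ divides the denominator of any periodic point.

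The second step handles a prime $p$ with $v_p(c) = -e < 0$, where we show $e$ is even and $v_p(x_j) = -e/2$ for all $j$. We compare $v_p(x_j^2) = 2v_p(x_j)$ with $-e$ in three cases.
\begin{itemize}
  \item If $2v_p(x_j) < -e$, then $v_p(x_{j+1}) = 2v_p(x_j) < v_p(x_j)$, and the orbit escapes as before.
  \item If $2v_p(x_j) > -e$, then $v_p(x_{j+1}) = -e$, and $2(-e) = -2e < -e$, so $x_{j+1}$ falls into the escaping case.
\end{itemize}
Both cases contradict periodicity, so $2v_p(x_j) = -e$ for every $j$. In particular $e$ is even and $v_p(x_j) = -e/2$.

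Combining the two steps over all primes, $\den(c) = \prod_p p^{e_p}$ with every $e_p$ even, so $\den(c) = d^2$ with $d = \prod_p p^{e_p/2}$. The denominator of each periodic point has exactly $p$-valuation $e_p/2$ at each $p \in \supp(d)$ and no other prime factors, so it equals $d$.

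The only delicate point is the middle case of the second step. There the equality $2v_p(x_j) = -e$ could allow cancellation, so $v_p(x_{j+1})$ is not determined directly. We sidestep this by applying the contradiction argument only to the strict cases and concluding that every orbit element must satisfy the equality. No information about $x_{j+1}$ in the equality case is needed.
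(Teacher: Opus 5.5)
Your proof is correct and follows essentially the same $p$-adic argument as the paper (the Eliahou--Fares proof). For $v_p(c)\ge 0$, a negative valuation doubles at each step and escapes to $-\infty$; for $v_p(c)<0$, both strict cases $2v_p(x)<v_p(c)$ and $2v_p(x)>v_p(c)$ lead to escape, which forces $v_p(x)=v_p(c)/2$. The only difference is cosmetic: you argue directly along the cycle $x_j$, whereas the paper states the dichotomy for any point whose orbit is $p$-adically bounded.
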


\begin{proof}
  We record the $p$-adic argument of Eliahou--Fares \cite[Proposition~2.1]{eliahou-fares2022}. Recall that, for a prime $p$, the \emph{$p$-adic valuation} $v_p(x)$ of a nonzero $x \in \Q$ is the unique integer $r$ such that $x = p^r x_1/x_2$ with $x_1, x_2$ integers coprime to $p$, and $v_p(0) = \infty$; it satisfies $v_p(yz) = v_p(y) + v_p(z)$ and $v_p(y+z) \ge \min\{v_p(y),v_p(z)\}$, with equality when $v_p(y) \neq v_p(z)$.

  Fix a prime $p$ and let $x \in \Q$. Suppose $v_p(c) \ge 0$ and $v_p(x) < 0$. Then $v_p(x^2) = 2v_p(x) < v_p(c)$, so $v_p(f_c(x)) = 2v_p(x)$, and iterating gives $v_p(f_c^n(x)) = 2^n v_p(x) \to -\infty$. Suppose instead $v_p(c) < 0$ and $v_p(x) \neq v_p(c)/2$. If $v_p(x) < v_p(c)/2$, then $v_p(x^2) = 2v_p(x) < v_p(c)$, so $v_p(f_c(x)) = 2v_p(x) < v_p(c)/2$ and the same iteration drives the orbit to $-\infty$. If $v_p(x) > v_p(c)/2$, then $v_p(x^2) > v_p(c)$, so $v_p(f_c(x)) = v_p(c) < v_p(c)/2$, reducing to the previous case. Hence, whenever the $f_c$-orbit of $x$ is bounded $p$-adically, we have $v_p(x) \ge 0$ if $v_p(c) \ge 0$ and $v_p(x) = v_p(c)/2$ if $v_p(c) < 0$.

  If $f_c$ admits a rational periodic point $x$, then the orbit of $x$ is finite, so the previous dichotomy applies for every prime $p$. For each prime with $v_p(c) < 0$, it forces $v_p(x) = v_p(c)/2$, and hence $v_p(c)$ is even. Thus $\den(c) = d^2$ for some $d \in \N$ with $v_p(d) = -v_p(c)/2$ for $p \in \supp(d)$.

  Now let $y$ be any rational periodic point of $f_c$. Applying the same dichotomy to $y$ gives $v_p(y) = -v_p(d)$ for $p \in \supp(d)$ and $v_p(y) \ge 0$ for $p \notin \supp(d)$. Hence $\den(y) = d$.
\end{proof}

Combining \cref{lem:telescoping} with \cref{prop:ef-transfer} gives the $S$-unit structure of the consecutive-point sums.

\begin{proposition} \label{prop:periodn-sunit}
  Let $n \in \N$ with $n \ge 2$, let $c \in \Q$, and assume that $f_c$ has a rational cycle $(x_0,\ldots,x_{n-1})$ of exact period $n$. Write $x_n=x_0$, $\den(c) = d^2$, and $S = \supp(d)$. Then
  \begin{equation} \label{eq:periodn-product}
    \prod_{j\in\{0,\ldots,n-1\}} (x_j+x_{j+1}) = 1
  \end{equation}
  and
  \begin{equation} \label{eq:periodn-sunits}
    x_j+x_{j+1} \in \Z[S^{-1}]^\times
  \end{equation}
  for every $j \in \{0,\ldots,n-1\}$.
\end{proposition}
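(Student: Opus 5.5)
The product identity \cref{eq:periodn-product} is exactly \cref{lem:telescoping} applied with $K = \Q$. The cycle has exact period $n \ge 2$, so $x_0$ has exact period $n$, and the lemma gives $\prod_j (x_j+x_{j+1}) = 1$ with $x_n = x_0$. In particular every factor $u_j = x_j+x_{j+1}$ is nonzero, which is needed before we can speak of valuations of $u_j$.

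For \cref{eq:periodn-sunits}, I would argue prime by prime and show that $v_p(u_j) = 0$ for every prime $p \notin S$ and every $j$. By \cref{prop:ef-transfer}, $\den(c) = d^2$ and every $x_j$ has denominator exactly $d$. So for $p \notin S$ we have $v_p(x_j) \ge 0$ for all $j$, and hence $v_p(u_j) \ge 0$ for all $j$, since each $u_j$ is a sum of two $p$-integral rationals.

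Now take $p$-adic valuations of \cref{eq:periodn-product}. This gives
\begin{equation*}
  \sum_{j\in\{0,\ldots,n-1\}} v_p(u_j) = v_p(1) = 0 .
\end{equation*}
The left-hand side is a sum of nonnegative integers, so each term vanishes: $v_p(u_j) = 0$ for all $j$. Hence the numerator and denominator of each $u_j$ involve only primes in $S$, that is, $u_j \in \Z[S^{-1}]^\times$.

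No step here is a real obstacle. The one point needing care is that nonnegativity of $v_p(u_j)$ for $p\notin S$ uses the full strength of \cref{prop:ef-transfer}, namely that every periodic point is $p$-integral off $S$. Combined with the product being a unit, this forces each factor to be a $p$-adic unit. At primes in $S$ there is nothing to prove, since $S$-units may have arbitrary valuations there.
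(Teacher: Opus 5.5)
Your proof is correct and is essentially the paper's argument. The paper writes $x_j = X_j/d$ via \cref{prop:ef-transfer}, clears denominators in the telescoping identity to get $\prod_j (X_j+X_{j+1}) = d^n$, and reads off that every prime factor of $X_j+X_{j+1}$ lies in $S$; this is exactly your computation $\sum_j v_p(u_j) = 0$ with nonnegative summands for $p \notin S$, stated globally rather than prime by prime.
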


\begin{proof}
  This is the $S$-unit reformulation of \cite[Proposition~2.5 and Corollary~2.6(i)]{eliahou-fares2022}; we record the short argument. By \cref{prop:ef-transfer}, each $x_j = X_j/d$ with $X_j \in \Z$ and $\gcd(X_j,d) = 1$; set $X_n = X_0$. Then \cref{lem:telescoping} gives \cref{eq:periodn-product}. Substituting $x_j = X_j/d$ yields
  \begin{equation*}
    \prod_{j\in\{0,\ldots,n-1\}} (X_j+X_{j+1}) = d^n,
  \end{equation*}
  so each $X_j+X_{j+1}$ is nonzero, and every prime divisor of $X_j+X_{j+1}$ lies in $S = \supp(d)$. Both the numerator and the denominator of $x_j+x_{j+1} = (X_j+X_{j+1})/d$ have all prime factors in $S$, which gives \cref{eq:periodn-sunits}.
\end{proof}

The previous proposition is unconditional. The next result is conditional on \cref{conj:abc} and provides a height bound template. Given any rational quantity $\tau_n$ built from the consecutive-point sums $u_0,\ldots,u_{n-1}$ such that both $\tau_n$ and $\tau_n+1$ are $S$-units, the ABC conjecture bounds the height of $\tau_n$ in terms of $\rad(d)$. For period $3$, the natural choice is $\tau_3 = t$, and the hypothesis is verified by \cref{prop:period3-rational-sunit}.

\begin{theorem} \label{thm:abc-template}
  Assume \cref{conj:abc}. Let $n \in \N$ with $n \ge 2$, let $c \in \Q$, and assume that $f_c$ has a rational cycle of exact period $n$. Write $\den(c) = d^2$ and set $S = \supp(d)$. Suppose that $\tau_n \in \Q$ satisfies
  \begin{equation} \label{eq:periodn-tau-sunits}
    \tau_n,\tau_n+1 \in \Z[S^{-1}]^\times.
  \end{equation}
  If $\tau_n = a/b$ in lowest terms, then for every $\eps > 0$ there exists $C_\eps > 0$ such that
  \begin{equation} \label{eq:periodn-abc-height-bound}
    \max\{|a|,|b|,|a+b|\} \le C_\eps \rad(d)^{1+\eps}.
  \end{equation}
  Equivalently,
  \begin{equation} \label{eq:periodn-abc-log-height-bound}
    h(\tau_n) \le (1+\eps)\sum_{p\in S}\log p + O_\eps(1).
  \end{equation}
\end{theorem}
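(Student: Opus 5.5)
Write $\tau_n = a/b$ in lowest terms with $b > 0$. The plan is to apply \cref{conj:abc} to the coprime triple $(a, b, a+b)$ and then read off a height bound.

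\textbf{Step 1: every prime dividing $ab(a+b)$ lies in $S$.} Since $\tau_n = a/b \in \Z[S^{-1}]^\times$ by \cref{eq:periodn-tau-sunits}, we have $a \neq 0$, and every prime dividing $a$ or $b$ lies in $S$. Next, $\tau_n + 1 = (a+b)/b$, and this fraction is again in lowest terms because $\gcd(a+b,b) = \gcd(a,b) = 1$. The hypothesis $\tau_n + 1 \in \Z[S^{-1}]^\times$ therefore gives $a+b \neq 0$ and puts every prime factor of $a+b$ in $S$. It follows that $\rad\bigl(ab(a+b)\bigr)$ divides $\prod_{p\in S} p = \rad(d)$.

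\textbf{Step 2: coprimality.} The integers $a$, $b$, $a+b$ are pairwise coprime, since any common prime factor of two of them divides the third and hence divides $\gcd(a,b) = 1$. So \cref{conj:abc} applies to the relation $a + b = (a+b)$. It gives $|a+b| \le C_\eps \rad(d)^{1+\eps}$.

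\textbf{Step 3: bounding the other two terms.} The conjecture as stated bounds only the sum term, so we apply it to the rearrangements as well. The relation $(a+b) + (-b) = a$ is again a coprime triple with the same radical, so $|a| \le C_\eps \rad(d)^{1+\eps}$. Likewise $(a+b) + (-a) = b$ gives $|b| \le C_\eps \rad(d)^{1+\eps}$. Sign conventions cause no trouble because the conjecture is stated for all coprime integers, not only positive ones. This proves \cref{eq:periodn-abc-height-bound}.

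\textbf{Step 4: the logarithmic form.} For the equivalent statement, take logarithms:
\begin{equation*}
h(\tau_n) = \log\max\{|a|,|b|\} \le \log C_\eps + (1+\eps)\sum_{p\in S}\log p,
\end{equation*}
which is \cref{eq:periodn-abc-log-height-bound}. Conversely, \cref{eq:periodn-abc-log-height-bound} implies \cref{eq:periodn-abc-height-bound}, using $|a+b| \le 2\max\{|a|,|b|\}$ and absorbing the factor $2$ into the constant.

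\textbf{Edge cases.} If $S = \emptyset$ (so $d = 1$), then $\tau_n$ and $\tau_n + 1$ are both $\pm 1$. The only possibility is $\tau_n = -2$, and the bound holds trivially. Note that $\tau_n = 0$ and $\tau_n = -1$ are excluded throughout by the unit hypothesis.

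I expect no serious obstacle. The only delicate points are checking that $(a+b)/b$ is in lowest terms, so that the $S$-unit condition transfers to the integer $a+b$, and applying ABC to the three rearranged triples so that all of $|a|$, $|b|$, $|a+b|$ are bounded. The hypothesis that a rational cycle exists is used only so that $d$ is defined, via \cref{prop:ef-transfer}.
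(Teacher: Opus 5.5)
Your proposal is correct and matches the paper's proof step for step: you transfer the $S$-unit condition to $a+b$ via $\gcd(a+b,b)=1$, apply \cref{conj:abc} to the three rearranged coprime relations, and then take logarithms. One small slip is in your edge-case remark: when $S=\emptyset$ the hypothesis is never satisfied, since $\tau_n\in\{\pm 1\}$ forces $\tau_n+1\in\{2,0\}$, so $\tau_n=-2$ is not admissible; that paragraph is unnecessary anyway, because the general argument already covers every case.
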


\begin{proof}
  By \cref{eq:periodn-tau-sunits}, both $\tau_n$ and $\tau_n+1$ are $S$-units, so they are nonzero. Writing $\tau_n = a/b$ in lowest terms gives $\gcd(a,b) = 1$ and $\tau_n+1 = (a+b)/b$, with $\gcd(a+b,b) = \gcd(a,b) = 1$. Hence $a$, $b$, and $a+b$ are nonzero, and all prime divisors of $a$, $b$, and $a+b$ belong to $S$. Therefore
  \begin{equation*}
    \rad(ab(a+b)) \le \rad(d).
  \end{equation*}
  Moreover,
  \begin{equation*}
    \gcd(a,b) = \gcd(-a,a+b) = \gcd(-b,a+b) = 1.
  \end{equation*}
  Thus the identities
  \begin{equation*}
    a+b = a+b, \qquad -a + (a+b) = b, \qquad -b + (a+b) = a
  \end{equation*}
  are all of the form $x+y = z$ with coprime integers. Applying \cref{conj:abc} gives
  \begin{equation*}
    \max\{|a|,|b|,|a+b|\} \le C_\eps \rad(ab(a+b))^{1+\eps} \le C_\eps \rad(d)^{1+\eps}.
  \end{equation*}
  This is \cref{eq:periodn-abc-height-bound}. Taking logarithms and using $h(\tau_n) = \log \max\{|a|,|b|\}$ gives \cref{eq:periodn-abc-log-height-bound}. Conversely, \cref{eq:periodn-abc-log-height-bound} implies \cref{eq:periodn-abc-height-bound}, after increasing the constant, since $|a+b| \le 2\max\{|a|,|b|\}$.
\end{proof}

\section{The \texorpdfstring{$(u,v)$}{u,v}-model and birational dictionary}\label{sec:uv-model}

We now pass from the original orbit coordinates to the $(u,v)$-coordinates. The purpose of this section is to record the birational dictionary between rational orbits of $f_c$ and the rational map $G$ on the $(u,v)$-plane, including the proviso needed to transport exact periods.

Let $c \in \Q$ and $x_0 \in \Q$, and let $(x_n)_{n \ge 0}$ be the orbit defined recursively by
\begin{equation*}
  x_{n+1} = f_c(x_n).
\end{equation*}
Define
\begin{equation} \label{eq:uv-def}
  u_n = x_n + x_{n+1} \qquad \text{and} \qquad v_n = x_n + x_{n+2}.
\end{equation}
The first coordinate records sums of consecutive points. The second coordinate records one-step skipped sums. Period equations for $f_c$ can be transported to these coordinates. Define the rational map $G \colon \C^2 \to \C^2$ by
\begin{equation} \label{eq:G}
  G(u,v) = \biggl( \frac{-u+v+uv}{u}, \frac{u^2-u+v-u^2v-uv+uv^2+v^2}{u} \biggr).
\end{equation}
This formula is defined when $u \neq 0$. The next theorem is the dictionary that allows us to move between the two coordinate systems. The substitution $u = x_0+x_1$ and $v = x_0+x_2$ originates with Kosunen \cite[Theorem~2.1, (2.4), and (2.5)]{kosunen2017}, which gives \cref{eq:x0-in-uv,eq:x1-in-uv,eq:c-in-uv} and the explicit formula for the map $G$; the underlying two-dimensional model on the $(x_0,x_1)$-plane is due to Erkama \cite[(1.1)]{erkama2006}. The proof below collects these formulas in one place and adds the period-equivalence statement (with the $u_k \neq 0$ proviso), which is used implicitly throughout \cite{kosunen2017} but not isolated as a single result there.

\begin{theorem} \label{thm:dictionary}
  Let $c \in \Q$ and $x_0 \in \Q$. Define $u_n$ and $v_n$ by \cref{eq:uv-def}. In particular, for $n = 0$ we have $u_0 = x_0+x_1$ and $v_0 = x_0+x_2$. Set $u = u_0$ and $v = v_0$, and assume that $u \neq 0$. Then
  \begin{align} \label{eq:x0-in-uv}
    x_0 &= \frac{u^2+u-v}{2u}, \\
    x_1 &= \frac{u^2-u+v}{2u}, \label{eq:x1-in-uv} \\
    c &= \frac{-u^4+u^2(2v-3)-v^2+4uv}{4u^2}. \label{eq:c-in-uv}
  \end{align}
  Moreover, the consecutive-point sums obey the recursion $(u_{n+1},v_{n+1}) = G(u_n,v_n)$ for every $n \ge 0$ with $u_n \neq 0$; since $(u_0,v_0) = (u,v)$, this gives $(u_n,v_n) = G^n(u,v)$ whenever $u_0,\ldots,u_{n-1} \neq 0$. In particular, for every $n \ge 1$, $x_0$ has exact period $n$ for $f_c$ if and only if $(u,v)$ has exact period $n$ for $G$ and the first coordinates $u_0,\ldots,u_{n-1}$ on the cycle are nonzero.
\end{theorem}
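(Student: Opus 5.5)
The plan is to first invert the substitution, then derive the recursion, and finally transport periods.

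\textbf{Inversion.} From $x_1 = x_0^2+c$ and $x_2 = x_1^2+c$ we get $x_2 - x_1 = x_1^2 - x_0^2 = (x_1-x_0)u$. Hence
\begin{equation*}
  v - u = x_2 - x_1 = (x_1-x_0)u, \qquad\text{so}\qquad x_1 - x_0 = \frac{v-u}{u}
\end{equation*}
whenever $u \neq 0$. Combining this with $x_0+x_1 = u$ and solving the $2\times 2$ linear system gives \cref{eq:x0-in-uv} and \cref{eq:x1-in-uv}. Then $c = x_1 - x_0^2$ is computed directly from these two formulas, and a routine simplification should yield \cref{eq:c-in-uv}.

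\textbf{Recursion.} Next, express $u_1 = x_1+x_2 = v - x_0 + x_1$, which by the inversion equals $v + (v-u)/u = (-u+v+uv)/u$. This is the first component of $G$. For $v_1 = x_1 + x_3$, write $x_3 = x_2^2 + c$ with $x_2 = v - x_0$, substitute the formulas for $x_0$, $x_1$ and $c$ in terms of $(u,v)$, and simplify to obtain the second component of $G$.

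The key observation is that these formulas were derived for an arbitrary starting point $x_0$ with the same $c$. Applying them to the shifted orbit starting at $x_n$ therefore gives $(u_{n+1},v_{n+1}) = G(u_n,v_n)$ whenever $u_n \neq 0$. Induction then yields $(u_n,v_n) = G^n(u,v)$ provided $u_0,\dots,u_{n-1}\neq 0$. I expect the simplification of the second component to be the most tedious step: it is pure algebra but error-prone. I would organize it by writing $x_3 - x_2 = (x_2-x_1)u_1$, which gives
\begin{equation*}
  v_1 = x_1 + x_2 + (x_2-x_1)u_1 = u_1 + (v-u)u_1,
\end{equation*}
so that $v_1 = u_1(1+v-u)$. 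Expanding this should match \cref{eq:G} and avoids computing $c$ at all.

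\textbf{Period equivalence.} Suppose first that $x_0$ has exact period $n$. If $n = 1$, then $u_0 = 2x_0$. The case $x_0 = 0$ would force $u = 0$, which is excluded by hypothesis, so the $u_0 \neq 0$ requirement holds. For $n \ge 2$, \cref{lem:telescoping} shows $\prod_j u_j = 1$, so every $u_j \neq 0$. Then $G^n(u,v) = (u_n,v_n) = (u_0,v_0)$ because the $x$-orbit is periodic. If $G^m(u,v) = (u,v)$ for some $m < n$, then since $u_0,\dots,u_{m-1} \neq 0$ we have $(u_m,v_m) = (u_0,v_0)$. By \cref{eq:x0-in-uv} applied to the orbit starting at $x_m$, this gives $x_m = x_0$, a contradiction.

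Conversely, suppose $(u,v)$ has exact $G$-period $n$ and $u_0,\dots,u_{n-1}\neq 0$. Then $(u_n,v_n) = (u,v)$ with $u \neq 0$, and \cref{eq:x0-in-uv} gives $x_n = x_0$. Minimality of $n$ follows by the same argument in reverse: if $x_m = x_0$ for some $m < n$, then $(u_m,v_m) = (u_0,v_0)$ directly from the definitions, contradicting exactness of the $G$-period. The only subtlety is ensuring that the nonvanishing of the $u_j$ is available at each step where the inversion formulas are used, and this bookkeeping is exactly what the proviso in the statement supplies.
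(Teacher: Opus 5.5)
Your proposal is correct and follows the paper's route: invert via $x_2-x_1=(x_1-x_0)u$, shift the resulting formulas along the orbit to get the recursion, and transport return times through the inversion formula using the nonvanishing of the $u_j$. There are two local differences. You obtain $v_1=u_1(1+v-u)$ from $x_3-x_2=(x_2-x_1)u_1$ rather than by expanding $x_3=(v-x_0)^2+c$, which anticipates the factorization \cref{eq:G-factorization} that the paper only verifies later as a polynomial identity. You also get $u_j\neq 0$ from \cref{lem:telescoping} instead of from the direct observation that $u_k=0$ forces $x_{k+2}=x_{k+1}$.
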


\begin{proof}
  Set
  \begin{equation} \label{eq:uv-seq}
    u_n = x_n + x_{n+1} \qquad \text{and} \qquad v_n = x_n + x_{n+2}.
  \end{equation}
  Since $x_{n+2}-x_{n+1} = x_{n+1}^2-x_n^2$, we have
  \begin{equation*}
    x_{n+2}-x_{n+1} = (x_{n+1}-x_n)(x_n+x_{n+1}) = (x_{n+1}-x_n)u_n.
  \end{equation*}
  Using $v_n = x_n+x_{n+2}$, this gives
  \begin{equation*}
    v_n = u_n+u_n(x_{n+1}-x_n) = u_n+u_n(u_n-2x_n) = u_n^2+u_n-2u_nx_n.
  \end{equation*}
  If $u_n \neq 0$, then
  \begin{equation} \label{eq:xn-from-uvn}
    x_n = \frac{u_n^2+u_n-v_n}{2u_n} \qquad \text{and} \qquad x_{n+1} = \frac{u_n^2-u_n+v_n}{2u_n}.
  \end{equation}
  For $n = 0$, this gives \cref{eq:x0-in-uv,eq:x1-in-uv}. The formula \cref{eq:c-in-uv} follows from $c = x_1-x_0^2$.

  Assume now $u_n \neq 0$. From \cref{eq:uv-seq,eq:xn-from-uvn} we obtain
  \begin{equation} \label{eq:un1}
    u_{n+1} = x_{n+1}+x_{n+2} = u_n+v_n-2x_n = \frac{-u_n+v_n+u_nv_n}{u_n}.
  \end{equation}
  For the second coordinate, write $x_{n+2} = v_n-x_n$ and $c = x_{n+1}-x_n^2$. Then
  \begin{equation*}
    x_{n+3} = x_{n+2}^2+c = (v_n-x_n)^2+x_{n+1}-x_n^2 = v_n^2-2v_nx_n+x_{n+1}.
  \end{equation*}
  Therefore
  \begin{equation} \label{eq:vn1}
    \begin{split}
      v_{n+1} &= x_{n+1}+x_{n+3} = v_n^2-2v_nx_n+2x_{n+1} \\
      &= \frac{u_n^2-u_n+v_n-u_n^2v_n-u_nv_n+u_nv_n^2+v_n^2}{u_n}.
    \end{split}
  \end{equation}
  Equations \cref{eq:un1,eq:vn1} are exactly the map in \cref{eq:G}. Hence $(u_{n+1},v_{n+1}) = G(u_n,v_n)$ whenever $u_n \neq 0$, and since $(u_0,v_0) = (u,v)$, induction gives $(u_n,v_n) = G^n(u,v)$ for every $n \ge 0$ with $u_0,\ldots,u_{n-1} \neq 0$.

  We first record the period-dividing equivalence. If $x_m = x_0$ for some $m \ge 1$, then $x_{m+1} = x_1$ and $x_{m+2} = x_2$, so $(u_m,v_m) = (u_0,v_0)$. Conversely, if $(u_m,v_m) = (u_0,v_0)$ and $u_m \neq 0$, then \cref{eq:xn-from-uvn} gives $x_m = (u_m^2+u_m-v_m)/(2u_m) = (u_0^2+u_0-v_0)/(2u_0) = x_0$. If $x_0$ has exact period $n$, then the first coordinates needed to iterate $G$ are nonzero: for $n \ge 2$, the equality $u_k = 0$ would give $x_{k+1} = -x_k$ and hence $x_{k+2} = x_{k+1}$, contradicting exact periodicity, while for $n = 1$ this is the standing assumption $u_0 \neq 0$. The period-dividing equivalence now shows that no positive return time for $x_0$ differs from a positive return time for $(u,v)$ under $G$, so the two exact periods are equal.
\end{proof}

\section{Period $3$ in the $(u,v)$-model}\label{sec:period3}

We specialize the $(u,v)$-model to exact period $3$. The telescoping identity of \cref{lem:telescoping} then collapses to the single relation $v = -1/(1+u)$ of \cref{lem:period3-uv}; this relation appears as Kosunen \cite[(4.3)]{kosunen2017}, where it is derived from the full polynomial system $G^3 = \mathrm{id}$, where $G$ is defined in \cref{eq:G}. We give a shorter proof from \cref{lem:telescoping} alone, valid over any field of characteristic $0$. From this relation we obtain the explicit parametrization of all rational period-$3$ cycles in \cref{prop:period3-classification}. 

\begin{lemma} \label{lem:period3-uv}
  Let $K$ be a field of characteristic $0$, let $c \in K$, and assume that $x_0 \in K$ has exact period $3$ for $f_c$. Set
  \begin{equation*}
    u = x_0+x_1 \qquad \text{and} \qquad v = x_0+x_2.
  \end{equation*}
  Then $u \neq 0$, $u \neq -1$, and
  \begin{equation} \label{eq:period3-uv-identity}
    v = -\frac{1}{1+u}.
  \end{equation}
\end{lemma}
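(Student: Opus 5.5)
The plan is to derive the relation from \cref{lem:telescoping} with $n = 3$, using $x_3 = x_0$. Write $u_0 = x_0+x_1 = u$, $u_1 = x_1+x_2$, $u_2 = x_2+x_0 = v$. The lemma then gives
\begin{equation*}
  u\,u_1\,v = 1.
\end{equation*}
In particular $u \neq 0$ and $v \neq 0$.

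The key step is to express $u_1$ in terms of $u$ and $v$ using only the linear structure of the three sums. Since $u_0 + u_1 + u_2 = 2(x_0+x_1+x_2)$, it suffices to show that the orbit sum $\sigma = x_0+x_1+x_2$ equals $-1$. To see this, apply \cref{eq:basic-telescope} in the form $x_{k+1}-x_{\ell+1} = (x_k-x_\ell)(x_k+x_\ell)$ with $(k,\ell) = (0,1)$. This gives $x_1 - x_2 = (x_0-x_1)u$. Alternatively, subtract consecutive relations $x_{j+1} = x_j^2 + c$ and sum over the cycle. Summing $x_{j+1}-x_{j+2} = (x_j-x_{j+1})(x_j+x_{j+1})$ cyclically does not immediately give $\sigma$, so a cleaner route is preferable.

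That route uses $x_2 = v - x_0$ together with \cref{eq:xn-from-uvn}. First, $x_1 - x_0 = (u^2-u+v-u^2-u+v)/(2u) = (v-u)/u$. Second, $x_1 - x_2 = (x_0-x_1)u = u - v$. Now $x_2 = v - x_0$, and also $x_2 = x_1 - (u-v)$. Equating the two expressions gives $x_0 + x_1 = 2v - u$, that is, $u = 2v - u$, so $u = v$. This cannot be right for a genuine $3$-cycle, so this computation must be redone carefully; avoiding this kind of sign or indexing slip is the main obstacle. The expected corrected version is the following. Apply the basic identity to the pair $(1,2)$ to get $x_2 - x_0 = (x_1-x_2)u_1$, and to the pair $(0,2)$ to get $x_1 - x_0 = (x_0-x_2)v$. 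Combining these with $x_1-x_2 = (x_0-x_1)u$ yields a closed linear system in the differences. Dividing by the nonzero difference $x_0-x_1$ produces a relation of the form $1 + u + uv\cdot(\ldots) = 0$, which combined with $u u_1 v = 1$ should collapse to $v(1+u) = -1$.

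Concretely, set $D = x_0 - x_1 \neq 0$. Then $x_1 - x_2 = uD$, and $x_2 - x_0 = -(D + uD) = -(1+u)D$. The $(0,2)$ identity gives $x_1 - x_0 = (x_0 - x_2)v$, so $-D = (1+u)Dv$. Cancelling $D$ gives $v(1+u) = -1$. This immediately forces $1+u \neq 0$ and yields \cref{eq:period3-uv-identity}. The fact $u \neq 0$ follows from the telescoping product, or directly from \cref{thm:dictionary}. The only delicate point is that each identity used involves points that are distinct in the $3$-cycle, which holds by exactness.
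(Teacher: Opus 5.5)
Your final paragraph (the one beginning ``Concretely'') is a correct and complete proof, and it takes a genuinely different route from the paper. The paper substitutes $x_0 = (u^2+u-v)/(2u)$ into the telescoping product $u\,u_1 v = 1$ and obtains the quadratic $(v-1)((u+1)v+1) = 0$. It must then eliminate the extraneous branch $v = 1$ by a separate computation (solving $8x_0^2-2x_0-1=0$ and observing that the surviving case $u=-2$ lies on both branches). It also proves $u \neq 0$ and $u \neq -1$ by two further direct substitutions.

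You instead use two individual difference identities, $x_1-x_2 = uD$ and $x_1-x_0 = (x_0-x_2)v$ (the latter is $f_c(x_0)-f_c(x_2)$ with $f_c(x_2)=x_0$), together with the additive fact that the three cyclic differences sum to zero. That additive constraint is exactly the information the product identity throws away. This is why you land directly on the linear relation $(1+u)v=-1$. As a result, no spurious root appears, $u \neq -1$ comes for free, and nothing is divided by $2$. The only place distinctness enters is $D = x_0-x_1 \neq 0$; the difference identities are polynomial identities and need no distinctness. You do not even need \cref{lem:telescoping} for $u \neq 0$, since $x_1-x_2 = uD$ and $x_1 \neq x_2$. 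The paper's route has the merit of deriving the relation from the general-$n$ telescoping identity, at the cost of the extra case analysis; yours is shorter and works verbatim over any field.

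When you write this up, delete the exploratory material that precedes the final paragraph. The proposed intermediate goal $x_0+x_1+x_2=-1$ is false: for $c=-29/16$ the cycle $-\tfrac14 \mapsto -\tfrac74 \mapsto \tfrac54$ has sum $-\tfrac34$. The computation ending in ``$u=v$'' is an arithmetic slip, since equating $v-x_0$ with $x_1-(u-v)$ yields only the tautology $x_0+x_1=u$.
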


\begin{proof}
  If $u = 0$, then $x_1 = -x_0$ and $x_2 = x_1^2+c = x_0^2+c = x_1$, contradicting exact period $3$. If $u = -1$, then $x_1 = -1-x_0$, so $c = x_1-x_0^2 = -1-x_0-x_0^2$ and $x_2 = x_1^2+c = (-1-x_0)^2-1-x_0-x_0^2 = x_0$, again a contradiction. Hence $u \notin \{0,-1\}$.

  By \cref{lem:telescoping}, $(x_0+x_1)(x_1+x_2)(x_2+x_0) = 1$. Since $u \neq 0$, the identities $x_1 = u-x_0$ and $x_2 = v-x_0$ give $v-u = x_2-x_1 = x_1^2-x_0^2 = u(u-2x_0)$. Since $2u \neq 0$, this gives $x_0 = (u^2+u-v)/(2u)$, and hence $x_1+x_2 = u+v-2x_0 = (uv-u+v)/u$. Substituting yields $v(uv-u+v) = 1$, equivalent to
  \begin{equation*}
    (u+1)v^2-uv-1 = (v-1)((u+1)v+1) = 0.
  \end{equation*}
  The roots are $v = 1$ and $v = -1/(1+u)$; the two branches meet at $u = -2$. It remains to rule out $v = 1$ for $u \neq -2$. If $v = 1$, then $x_2 = 1-x_0$ and $x_3 = x_0$ give $c = x_0-(1-x_0)^2$, so $x_1 = x_0^2+c = 3x_0-1$ and $x_2 = x_1^2+c = 8x_0^2-3x_0$. Comparing with $x_2 = 1-x_0$ yields $8x_0^2-2x_0-1 = 0$, whose roots are $x_0 \in \{-\tfrac14,\tfrac12\}$. The case $x_0 = \tfrac12$ gives $x_1 = x_2 = x_0$, contradicting exact period $3$; the case $x_0 = -\tfrac14$ gives $u = -2$, where $1 = -1/(1+u)$. Thus \cref{eq:period3-uv-identity} holds in every case.
\end{proof}

The next proposition gives the full period-$3$ parametrization in the present notation. The formulas for $x_1(t)$ in \cref{eq:x1-period3} and $c(t)$ in \cref{eq:period3-param} go back to Poonen \cite[Theorem~1(3)]{poonen1998} and are recorded by Fu--Stoll \cite[Theorem~2.1(1)]{fu-stoll2024}; an equivalent parametrization of the full $3$-cycle via $u = x_0+x_1$ appears in Erkama \cite[Section~3]{erkama2006}. Poonen's proof goes through general type-classification on dynatomic curves, and Erkama's proof manipulates the period-$3$ identity $(x_0+x_2)(1+(x_0+x_1)) = -1$ in the original $(x,y)$-model. The proof below is shorter and entirely internal to the $(u,v)$-model: the algebraic factorization $V' = U'(1+V-U)$ of \cref{eq:G-factorization} makes the iteration of $G$ on the locus $v = -1/(1+u)$ collapse to a three-step recursion, and the exact-period verification reduces to the rational unsolvability of $u^2+u+1 = 0$. The proof also produces all three cycle points in explicit polynomial form, rather than one point with the others recovered by symmetry.

\begin{proposition} \label{prop:period3-classification}
  For $t \in \Q \setminus \{0,-1\}$, define
  \begin{equation} \label{eq:period3-param}
    c(t) = -\frac{t^6+2t^5+4t^4+8t^3+9t^2+4t+1}{4t^2(t+1)^2}
  \end{equation}
  and
  \begin{align} \label{eq:x0-period3}
    x_0(t) &= \frac{t^3+2t^2+t+1}{2t(t+1)}, \\
    x_1(t) &= \frac{t^3-t-1}{2t(t+1)}, \label{eq:x1-period3} \\
    x_2(t) &= -\frac{t^3+2t^2+3t+1}{2t(t+1)}. \label{eq:x2-period3}
  \end{align}
  Then $x_0(t),x_1(t),x_2(t)$ are distinct and form a rational cycle of exact period $3$ for $f_{c(t)}$. Conversely, if $c \in \Q$ and $f_c$ has a rational point of exact period $3$, then $(c,x_0)$ arises from \cref{eq:period3-param,eq:x0-period3,eq:x1-period3,eq:x2-period3} for a unique $t \in \Q \setminus \{0,-1\}$.
\end{proposition}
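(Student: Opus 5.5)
The plan is to run everything through the $(u,v)$-model. Set $u = t$ and $v = -1/(1+t)$, the locus of \cref{lem:period3-uv}, and read off the orbit data from the formulas of \cref{thm:dictionary}. Substituting into \cref{eq:x0-in-uv}, \cref{eq:x1-in-uv} and \cref{eq:c-in-uv} should give exactly \cref{eq:x0-period3}, \cref{eq:x1-period3} and \cref{eq:period3-param}. The third point is then $x_2 = v - x_0$, which should simplify to \cref{eq:x2-period3}. These are routine rational-function simplifications, and I will only indicate them.

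For the forward direction, I compute the action of $G$ on the curve $\Gamma = \{v = -1/(1+u)\}$. For the first coordinate, $uv = -u/(1+u)$ gives $-u+v+uv = -(u+1)$, so $u' = -(1+u)/u$. For the second coordinate, I show that $v' = -1/(1+u')$, i.e.\ that $G$ maps $\Gamma$ into itself. The intended route is to factor the numerator of the second component of \cref{eq:G} as $V' = U'(1+V-U)$, where $U'$ and $V'$ are the numerators of the two components, and then substitute the relation defining $\Gamma$.

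Granting invariance, the dynamics on $\Gamma$ reduce to the M\"obius map $\mu(u) = -(1+u)/u$. One checks directly that
\[
\mu^2(u) = -\frac{1}{1+u} \qquad\text{and}\qquad \mu^3 = \mathrm{id}.
\]
So every point of $\Gamma$ with $u \notin \{0,-1\}$ has $G$-period dividing $3$. Its exact period is $1$ only if $\mu(u) = u$, i.e.\ $u^2+u+1 = 0$, which has no rational root; hence the exact period is $3$. The first coordinates along the cycle are $t$, $-(1+t)/t$ and $-1/(1+t)$, all nonzero because $t \notin \{0,-1\}$. The last clause of \cref{thm:dictionary} then transfers exact period $3$ to $x_0(t)$ under $f_{c(t)}$. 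Exact period $3$ forces $x_0, x_1, x_2$ to be distinct, and \cref{eq:xn-from-uvn} identifies them with the displayed formulas.

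For the converse, let $x_0 \in \Q$ have exact period $3$ for $f_c$. \Cref{lem:period3-uv} gives $u = x_0 + x_1 \notin \{0,-1\}$ and $v = -1/(1+u)$. Put $t = u \in \Q$. Then \cref{thm:dictionary} expresses $x_0$ and $c$ as functions of $(u,v) = (t,-1/(1+t))$, which are precisely $x_0(t)$ and $c(t)$. Uniqueness holds because any admissible $t$ must equal $x_0 + x_1$, and this sum is determined by $(c,x_0)$.

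The main obstacle I expect is the invariance step, namely verifying $v' = -1/(1+u')$ on $\Gamma$. My first attempt will be to clear denominators and check the polynomial identity directly, using $u' + 1 = -1/u$. The claim then reduces to $v' = u$ on $\Gamma$, which I will confirm by substituting $v = -1/(1+u)$ into \cref{eq:vn1}. If that computation becomes messy, the fallback is more conceptual: $x_1$ also has exact period $3$, so \cref{lem:period3-uv} applied to $x_1$ gives $v_1 = -1/(1+u_1)$ directly.
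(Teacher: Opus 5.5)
Your proposal is correct and follows the paper's proof essentially step for step. You use the same substitution $u=t$, $v=-1/(1+t)$ into the dictionary, the same factorization $V'=U'(1+V-U)$ giving $v_1=u=-1/(1+u_1)$, and the same converse via \cref{lem:period3-uv}; your M\"obius-map framing and your remark that only period $1$ needs excluding are just tidier packagings of the paper's explicit three-step iteration. One caveat: your fallback for the invariance step would be circular, since applying \cref{lem:period3-uv} to $x_1$ presupposes the exact period $3$ you are trying to prove. Your primary computation already settles that step, so the fallback is not needed.
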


\begin{proof}
  Let $t \in \Q \setminus \{0,-1\}$ and set
  \begin{equation*}
    u = t \qquad \text{and} \qquad v = -\frac{1}{1+t}.
  \end{equation*}
  Substituting in \cref{eq:c-in-uv} gives \cref{eq:period3-param}. Substituting in \cref{eq:x0-in-uv,eq:x1-in-uv} and using $x_2 = v-x_0$ gives \cref{eq:x0-period3,eq:x1-period3,eq:x2-period3}. It remains to show exact period $3$. Write $G(U,V) = (U',V')$ for a generic pair with $U \neq 0$. By \cref{eq:G},
  \begin{equation*}
    U' = \frac{-U+V+UV}{U} \qquad \text{and} \qquad V' = \frac{U^2-U+V-U^2V-UV+UV^2+V^2}{U}.
  \end{equation*}
  Expanding the product gives
  \begin{equation*}
    (-U+V+UV)(1+V-U) = U^2-U+V-U^2V-UV+UV^2+V^2.
  \end{equation*}
  Therefore
  \begin{equation} \label{eq:G-factorization}
    V' = U'(1+V-U).
  \end{equation}
  Now iterate from $(u_0,v_0) = (u,v)$. Since $(1+u)v = -1$, we get
  \begin{equation} \label{eq:period3-iterate1}
    u_1 = \frac{-u+v+uv}{u} = \frac{-u+(1+u)v}{u} = -\frac{u+1}{u}, \qquad v_1 = u_1(1+v-u) = u.
  \end{equation}
  Then $v_1 = -1/(1+u_1)$. Repeating the same computation gives
  \begin{equation} \label{eq:period3-iterate2}
    u_2 = -\frac{u_1+1}{u_1} = -\frac{1}{u+1}, \qquad v_2 = -\frac{1}{1+u_2} = -\frac{u+1}{u}.
  \end{equation}
  Since $u \notin \{0,-1\}$, the coordinates $u_0 = u$, $u_1 = -(u+1)/u$, and $u_2 = -1/(u+1)$ are nonzero, so these iterates are defined. One more step gives $u_3 = u$ and $v_3 = v$. Hence $G^3(u,v) = (u,v)$. If $(u,v)$ had period $1$ or $2$ under $G$, then $u_1 = u$ or $u_2 = u$. Each case implies $u^2+u+1 = 0$. This is impossible for $u = t \in \Q$. Hence $(u,v)$ has exact period $3$. By \cref{thm:dictionary}, $x_0(t)$ has exact period $3$ for $f_{c(t)}$. Therefore $x_1(t) = f_{c(t)}(x_0(t))$ and $x_2(t) = f_{c(t)}(x_1(t))$, so $x_0(t),x_1(t),x_2(t)$ are distinct and form an exact period-$3$ cycle.

  Conversely, assume $c \in \Q$ and that $x_0 \in \Q$ has exact period $3$. Define $u = x_0+x_1$ and $v = x_0+x_2$. By \cref{lem:period3-uv},
  \begin{equation*}
    u \in \Q \setminus \{0,-1\} \qquad \text{and} \qquad v = -\frac{1}{1+u}.
  \end{equation*}
  Set $t = u$. Equations \cref{eq:x0-in-uv,eq:c-in-uv} then give $x_0 = x_0(t)$ and $c = c(t)$. The parameter is unique because $t = x_0+x_1$.
\end{proof}

\section{The period-$3$ locus and the ghost curve} \label{sec:period3-ghost}

In this section, we separate the intrinsic period-$3$ dynamics of $G$ from the parametrization of rational period-$3$ cycles. The main point is that the genuine rational period-$3$ locus is the curve $v = -1/(1+u)$, while the remaining algebraic factor is a ghost curve: it appears in the necessary condition $u_0 u_1 u_2 = 1$ but carries no rational period-$3$ orbit.

The relation $v = -1/(1+u)$ obtained in \cref{lem:period3-uv} was extracted from an $f_c$-orbit already assumed to have exact period $3$. We now upgrade the conclusion to a full equivalence at the level of the map $G$ defined in \cref{eq:G}, with the locus described by an intrinsic condition on $(u, v)$ that makes no reference to $f_c$. Resolving the system $G^3(u,v) = (u,v)$ also exposes the geometry behind the relation: the algebraic factorization of $u_0 u_1 u_2 - 1$ splits the locus $u_0 u_1 u_2 = 1$, which away from the fixed line $v = u$ is a necessary condition for period dividing $3$, into the curve $v = -1/(1+u)$ and a single \emph{ghost curve} that satisfies the algebra but carries no rational period-$3$ orbit; see \cref{kuva1}.

\begin{lemma} \label{lem:period3-uv-direct}
  A point $(u,v) \in \Q^2$ has exact period $3$ for $G$ if and only if $u \in \Q \setminus \{0,-1\}$ and
  \begin{equation*}
    v = -\frac{1}{1+u}.
  \end{equation*}
\end{lemma}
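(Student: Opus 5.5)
The ``if'' direction is already contained in the proof of \cref{prop:period3-classification}. That computation used only the factorization \cref{eq:G-factorization} and the relation $(1+u)v=-1$. It showed $G^3(u,v)=(u,v)$ with all intermediate first coordinates nonzero, and it excluded periods $1$ and $2$ because they would force $u^2+u+1=0$. No reference to $f_c$ is needed, so I will simply cite that argument as a statement about $G$ alone.

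For the ``only if'' direction I intend to stay intrinsic to $G$. Let $(u,v)$ have exact period $3$, with iterates $(u_k,v_k)$ and $u_0,u_1,u_2\neq 0$ so that the iterates are defined. I will define $x_k=(u_k^2+u_k-v_k)/(2u_k)$ and $c=x_1-x_0^2$, and check algebraically from \cref{eq:G} that $x_{k+1}=f_c(x_k)$ and $u_k=x_k+x_{k+1}$. This is the reverse reading of the computations in \cref{thm:dictionary}: the identity $x_{n+1}=(u_n^2-u_n+v_n)/(2u_n)$ must agree with the formula for $x_{n+1}$ obtained from $(u_{n+1},v_{n+1})$. Once this is done, \cref{thm:dictionary} gives exact period $3$ for $x_0$ under $f_c$ over $\Q$, and \cref{lem:period3-uv} then yields $u\notin\{0,-1\}$ and $v=-1/(1+u)$.

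As a cross-check, and to exhibit the ghost curve, I will also run a direct route. By \cref{lem:telescoping} (or directly), $u_0u_1u_2=1$. Substituting $u_1=(-u+v+uv)/u$ and $u_2$ computed via \cref{eq:G-factorization} should factor $u_0u_1u_2-1$ as the product of $(1+u)v+1$, the fixed-line factor $v-u$, and a residual ghost factor. I would then show that rational points on the ghost factor are not of exact period $3$: either they are fixed points or points of period dividing $2$, or they violate $G^3=\mathrm{id}$. This is done by substituting back into $G^3(u,v)=(u,v)$, in the same way that $v=1$ was excluded in \cref{lem:period3-uv}.

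The main obstacle is this ghost-curve elimination if done by brute force, since $G^3$ has large degree. For that reason I will rely primarily on the dictionary route through \cref{thm:dictionary} and \cref{lem:period3-uv}, which sidesteps it. The one delicate point there is verifying that every $G$-orbit with nonzero first coordinates genuinely comes from an $f_c$-orbit. This amounts to checking that the map $(x_0,c)\mapsto(u,v)$ is inverse to \cref{eq:x0-in-uv,eq:c-in-uv} and that it intertwines $f_c$ with $G$, which is a routine polynomial identity.
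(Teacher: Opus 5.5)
Your main route is the paper's: the forward direction reuses the iteration from \cref{prop:period3-classification}, and the converse reconstructs $x_0$ and $c$ from $(u,v)$, invokes \cref{thm:dictionary} to get exact period $3$ for $f_c$, and finishes with \cref{lem:period3-uv}. Your optional cross-check has one error: $v-u$ is not a factor of $u_0u_1u_2-1$, since on the fixed line $v=u$ the product is $u^3$. The paper instead obtains $(v-u)(u_0u_1u_2-1)=0$ from \cref{eq:vn-un-product}, factors $u_0u_1u_2-1$ as $-(uv+v+1)$ times the ghost cubic divided by $u$, and dismisses the ghost curve without expanding $G^3$, because the dictionary route already places every rational period-$3$ point on $v=-1/(1+u)$.
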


\begin{proof}
  Write $G(u,v) = (R(u,v), Q(u,v))$, and for $n \in \{0,1,2,\ldots\}$ let $(u_n, v_n) = G^n(u, v)$, defined whenever the iterates do not pass through $u_k = 0$. The two coordinates of $G$ are tied by the polynomial identity
  \begin{equation} \label{eq:G-factor-direct}
    (-U+V+UV)(1+V-U) = U^2-U+V-U^2V-UV+UV^2+V^2,
  \end{equation}
  which gives $Q(U,V) = R(U,V)(1+V-U)$. In particular,
  \begin{equation} \label{eq:vn-un-recursion}
    v_{n+1}-u_{n+1} = u_{n+1}(v_n-u_n),
  \end{equation}
  so by iteration
  \begin{equation} \label{eq:vn-un-product}
    v_n - u_n = \biggl(\prod_{j=0}^{n-1} u_{j+1}\biggr)(v-u)
  \end{equation}
  whenever $u_1,\ldots,u_n$ are defined and nonzero.

  The forward implication is a direct iteration check that reuses the closed forms already obtained in \cref{prop:period3-classification}. Suppose $u \in \Q \setminus \{0,-1\}$ and $v = -1/(1+u)$. Setting $t = u$ in \cref{prop:period3-classification}, the iteration computed in \cref{eq:period3-iterate1,eq:period3-iterate2} gives $u_1 = -(u+1)/u$, $u_2 = -1/(u+1)$, and $(u_3, v_3) = (u, v)$. The values $u_0, u_1, u_2$ are pairwise distinct, since $u_j = u_k$ for $j \neq k$ in $\{0,1,2\}$ would force $u^2+u+1 = 0$, which has no rational solution. Therefore $(u, v)$ has exact period $3$ for $G$.

  For the converse, rather than analysing the system $G^3(u,v) = (u,v)$ directly, we transport the orbit through \cref{thm:dictionary} to the $f_c$-side, where \cref{lem:period3-uv} already pins down the relation between $u$ and $v$. Suppose $(u, v) \in \Q^2$ has exact period $3$ for $G$. Then $G$ is defined at $(u_0, v_0)$, $(u_1, v_1)$, $(u_2, v_2)$, so $u_0, u_1, u_2 \neq 0$. Define $x_0$, $x_1$, and $c$ by \cref{eq:x0-in-uv,eq:x1-in-uv,eq:c-in-uv}. Then $x_0+x_1 = u$, the definition of $c$ gives $x_1 = f_c(x_0)$, and a direct substitution gives $x_0+f_c(x_1) = v$. By \cref{thm:dictionary}, the $f_c$-orbit $(x_n)$ reconstructed from $(u, v)$ has exact period $3$. \cref{lem:period3-uv} then gives $u \neq -1$ and $v = -1/(1+u)$.

  \begin{figure}[t]
    \begin{center}
    \includegraphics[width=0.6\textwidth,angle=0]{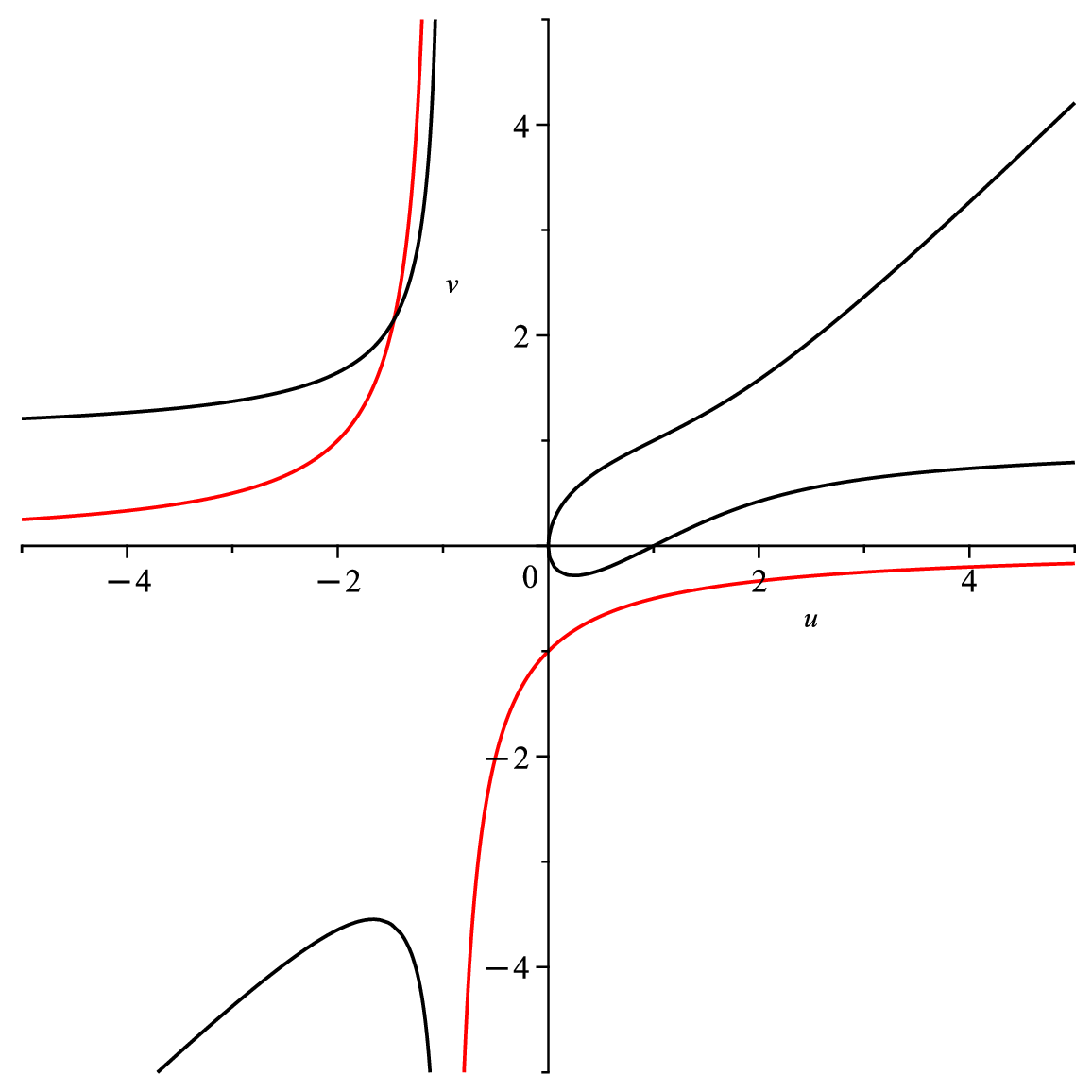}
    \caption{The period-$3$ curve $v = -1/(1+u)$ and the ghost curve $u^2 v - u^2 - u v^2 + uv + u - v^2 = 0$ on the real $(u,v)$-plane.} \label{kuva1}
    \end{center}
  \end{figure}

  We close the proof with the geometric picture underlying \cref{kuva1}. Applying \cref{eq:vn-un-product} with $n = 3$ and $(u_3, v_3) = (u, v)$ gives
  \begin{equation*}
    (v-u)(u_0 u_1 u_2 - 1) = 0.
  \end{equation*}
  Exact period $3$ rules out $v = u$, since the line $v = u$ is fixed by $G$ as $R(u,u) = Q(u,u) = u$, so we obtain $u_0 u_1 u_2 = 1$. Expanding this with the closed forms $u_1 = (-u+v+uv)/u$ and $u_2 = (-u_1+v_1+u_1v_1)/u_1$ in terms of $(u,v)$ yields the polynomial factorization
  \begin{equation} \label{eq:ghost-factorization}
    u_0 u_1 u_2 - 1 = -\frac{(uv+v+1)(u^2 v - u^2 - u v^2 + uv + u - v^2)}{u}.
  \end{equation}
  Its zero locus is the union of the period-$3$ curve $uv + v + 1 = 0$, equivalent to $v = -1/(1+u)$, and the \emph{ghost curve}
  \begin{equation*}
    u^2 v - u^2 - u v^2 + uv + u - v^2 = 0.
  \end{equation*}
  By the necessity argument above, every rational period-$3$ point lies on $v = -1/(1+u)$, so the ghost curve carries no rational period-$3$ orbit. Substituting $v = -1/(1+u)$ into the ghost equation and clearing denominators gives $-(u+1)(u^3+u^2+1) = 0$; excluding $u = -1$, the rational intersection points would satisfy $u^3 + u^2 + 1 = 0$, which has no rational root, since by the rational root theorem any such root would be $\pm 1$ and neither is a root. Hence the ghost curve and the period-$3$ curve do not meet rationally.
\end{proof}

Numerically, the cubic $u^3+u^2+1 = 0$ identified in the proof of \cref{lem:period3-uv-direct} has one real root $u \approx -1.465571232$; the corresponding point $(u, v)$ with $v \approx 2.147899036$ on the period-$3$ curve is the visible intersection of the two curves in \cref{kuva1}. The other two roots are the complex conjugate pair $u \approx 0.2327856159 \pm 0.7925519925i$.

The ghost factor in \cref{eq:ghost-factorization} nevertheless defines a genuine cubic curve. Call a projective plane curve \emph{geometrically irreducible} if it is irreducible over the algebraic closure. A geometrically irreducible projective plane cubic with a rational singular point is rational over $\Q$: the singular point of such a cubic is necessarily a double point, and projection from it parametrizes the curve; see Silverman \cite[Proposition~III.1.6]{silvermanAEC} for the corresponding statement in Weierstrass form. The projective closure of the ghost curve is instead nonsingular and therefore geometrically irreducible, since by B\'ezout's theorem distinct components of a projective plane curve meet over the algebraic closure and every point where two components meet is singular; see Fulton \cite[\S 5.3 and Problem~5.21]{fulton2008}. This distinction does not itself explain the absence of rational period-$3$ orbits, since a nonsingular cubic with a rational point can still have infinitely many rational points; that absence follows from the closing argument in the proof of \cref{lem:period3-uv-direct}.

Since the closing argument in the proof of \cref{lem:period3-uv-direct} already rules out rational period-$3$ orbits on the ghost curve, the next proposition is not needed for our main results; it identifies the projective closure of the ghost curve as an elliptic curve and explains why the three disjoint affine arcs in \cref{kuva1} belong to one connected real projective curve. A nonsingular projective plane cubic has genus $1$ by the degree-genus formula and therefore admits no rational parametrization, since the genus of the target of a nonconstant map of smooth curves is at most the genus of the source; see \cite[Exercises~II.2.7 and~II.2.8(a)]{silvermanAEC}. Such a curve with a rational point is an elliptic curve over $\Q$ once that point is taken as the origin; see \cite[\S III.3, Definition and Proposition~III.3.1]{silvermanAEC}. The standard Weierstrass equation, its discriminant, and the criterion that the curve is nonsingular exactly when the discriminant is nonzero are in \cite[\S III.1 and Proposition~III.1.4]{silvermanAEC}.

The reduction of a plane cubic to a Weierstrass equation is classical; a Weierstrass model is obtained in \cite[Proposition~III.3.1]{silvermanAEC} by means of the Riemann--Roch theorem \cite[Theorem~II.5.4 and Corollary~II.5.5]{silvermanAEC}. To produce an explicit change of coordinates for the ghost curve, we use an elementary construction in homogeneous coordinates $[u:v:w]$ on the projective plane, where $(u, v, w) \neq (0, 0, 0)$, nonzero scalar multiples represent the same point, and $w = 1$ recovers the affine $(u, v)$-plane. We first find a rational point at which the tangent line has contact of order three, choose it as the origin, and then apply a projective linear change of coordinates carrying the point to $[0:1:0]$ and the tangent to the line at infinity. Completing the square and rescaling then put the transformed cubic into the desired Weierstrass form. \cref{prop:ghost-elliptic} carries this out for the ghost curve, exhibiting the point, the change of coordinates, and the resulting equation.

\begin{proposition} \label{prop:ghost-elliptic}
  The projective closure of the ghost curve
  \begin{equation} \label{eq:ghost-curve}
    u^2 v - u^2 - u v^2 + uv + u - v^2 = 0
  \end{equation}
  obtained by adding its three points at infinity is a nonsingular plane cubic over $\Q$. The tangent at $(1, 0)$ meets the curve with third-order contact, so this rational point can serve as the point at infinity in a Weierstrass equation. More explicitly, a projective linear change of coordinates given in the affine coordinates $(u, v)$ and $(X, Y)$ by
  \begin{equation} \label{eq:ghost-weierstrass-map}
    X = \frac{8v}{u - 2v - 1} \qquad \text{and} \qquad Y = \frac{8(u - v + 1)}{u - 2v - 1}
  \end{equation}
  transforms the projective closure of the ghost curve into the projective closure of
  \begin{equation} \label{eq:ghost-weierstrass}
    Y^2 = X^3 + 13 X^2 + 48 X + 64,
  \end{equation}
  which is nonsingular. Thus the projective closure of the ghost curve, with the rational point $(1, 0)$ as origin, is an elliptic curve over $\Q$. Over $\R$, its real locus is connected; removing the three points at infinity leaves the three arcs visible in the affine $(u, v)$-plane.
\end{proposition}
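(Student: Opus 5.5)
The plan is to work throughout with the homogenization
\[
F(u,v,w) = u^2v - u^2w - uv^2 + uvw + uw^2 - v^2w
\]
of \cref{eq:ghost-curve}, and to let the explicit coordinate change \cref{eq:ghost-weierstrass-map} carry most of the burden.

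\textbf{Points at infinity.} Setting $w=0$ leaves $uv(u-v)$. So the three points at infinity are $[1:0:0]$, $[0:1:0]$ and $[1:1:0]$, all rational and distinct.

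\textbf{The linear change of coordinates.} In homogeneous form, \cref{eq:ghost-weierstrass-map} reads
\[
[X:Y:Z] = [\,8v : 8(u-v+w) : u-2v-w\,].
\]
I will first check that this $3\times 3$ matrix has nonzero determinant, so it is a projective linear automorphism of the plane defined over $\Q$. Then I invert it, substitute into the homogenized Weierstrass form $Y^2Z - X^3 - 13X^2Z - 48XZ^2 - 64Z^3$, and verify that the result equals a nonzero rational constant times $F$. This polynomial identity is the computational heart of the proof; it is routine but tedious, and I expect it to be the main obstacle. To organize it, I would compare the coefficients of the ten cubic monomials rather than expand blindly.

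\textbf{Nonsingularity.} Nonsingularity is invariant under projective linear maps, so it reduces to the Weierstrass model. The point at infinity $[0:1:0]$ of a Weierstrass cubic is always smooth. In the affine part, a Weierstrass curve $Y^2 = g(X)$ is singular exactly when $g$ has a repeated root. For $g(X) = X^3+13X^2+48X+64$, the cubic discriminant $a^2b^2-4b^3-4a^3c-27c^2+18abc$ evaluates to $-7168 \neq 0$. This shows the projective closure of the ghost curve is a nonsingular plane cubic over $\Q$.

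\textbf{The tangent at $(1,0)$.} The point $[1:0:1]$ lies on the line $Z = u-2v-w = 0$ and maps to $[0:8:0] = [0:1:0]$, the Weierstrass origin. The line $Z=0$ meets the Weierstrass cubic only at $[0:1:0]$, with multiplicity three, since setting $Z=0$ leaves $-X^3$. Pulling back along the linear map, the line $u-2v-w=0$ is the tangent at $(1,0)$, and it has third-order contact there. Hence $(1,0)$, taken as origin, makes the curve an elliptic curve over $\Q$, by the facts from Silverman quoted before the statement.

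\textbf{Real connectedness.} Because the discriminant is negative, $g$ has exactly one real root $X=e$. Then $g \ge 0$ exactly on $[e,\infty)$, so the real locus is a single oval-free branch. Together with the point at infinity it forms one connected closed curve in $\mathbb{P}^2(\R)$, and this property transfers back through the real linear map. Finally, removing the three real points at infinity $[1:0:0]$, $[0:1:0]$, $[1:1:0]$ cuts this circle into three arcs. These are the three affine arcs seen in \cref{kuva1}.
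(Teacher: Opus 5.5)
Your proposal is correct, but it is organized differently from the paper's proof. You let the given transformation do all the work. Once the single identity
\[
Y^2Z - X^3 - 13X^2Z - 48XZ^2 - 64Z^3 = -256\,F(u,v,w) \quad\text{at } (X,Y,Z) = (8v,\,8(u-v+w),\,u-2v-w)
\]
is checked, nonsingularity of the ghost curve follows from the discriminant of $X^3+13X^2+48X+64$ and smoothness of $[0:1:0]$. No inversion is needed for the check; substituting these forward formulas suffices. The third-order contact at $(1,0)$ follows by pulling back the line $Z=0$, which meets the Weierstrass cubic only at $[0:1:0]$.

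The paper instead checks both properties intrinsically on $F$:
\begin{itemize}
\item nonsingularity at infinity via $\partial F/\partial w = -(u^2-uv+v^2)$;
\item affine nonsingularity via a case split at $u=-1$ and a resultant equal to $112$;
\item the tangent $u=2v+1$ at $(1,0)$, by substitution giving $2v^3$.
\end{itemize}
It then \emph{derives} the transformation through the intermediate coordinates $(\hat X,\hat Y,\hat Z)=(v,u,u-2v-w)$, in which $F$ becomes $2\hat X^3+\hat X^2\hat Z+3\hat X\hat Y\hat Z-\hat Y^2\hat Z+\hat Y\hat Z^2$, followed by completing the square and rescaling.

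Your route is more economical: one identity replaces the partial-derivative analysis, which the paper's own discriminant computation already makes logically redundant once the transformation is verified. The paper's route explains where the transformation comes from and keeps the nonsingularity and flex claims independent of it. Its intermediate coordinates are also the natural way to organize the identity you flagged as the main computational obstacle. The real-locus argument is the same in both.

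Two harmless slips:
\begin{itemize}
\item $[1:0:1]$ maps to $[0:16:0]$, not $[0:8:0]$.
\item Your $-7168$ is the discriminant of the cubic polynomial; the Weierstrass discriminant is $16\cdot(-7168)=-114688$, as in the paper.
\end{itemize}
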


\begin{proof}
  Homogenizing the ghost curve, as in \cite[\S I.2 and Example~I.2.8]{silvermanAEC}, gives $F(u, v, w) = u^2 v - u^2 w - u v^2 + uvw + uw^2 - v^2 w$. Its projective closure is $F = 0$, and $w = 0$ is the line at infinity. A point of the projective closure is nonsingular exactly when the three partial derivatives of $F$ do not all vanish there, so a single nonvanishing partial derivative already shows that the point is nonsingular. Since $F$ is homogeneous of degree $3$, its partial derivatives are homogeneous of degree $2$, so rescaling a representative by a nonzero scalar multiplies each of them by the square of that scalar. Their values thus depend on the representative, while their simultaneous vanishing does not; see \cite[Problem~5.1]{fulton2008}. Differentiating $F$ gives
  \begin{equation} \label{eq:ghost-partials}
    \begin{split}
      \frac{\partial F}{\partial u} &= 2uv - 2uw - v^2 + vw + w^2, \\
      \frac{\partial F}{\partial v} &= u^2 - 2uv + uw - 2vw, \\
      \frac{\partial F}{\partial w} &= -u^2 + uv + 2uw - v^2.
    \end{split}
  \end{equation}
  Setting $w = 0$ in $F$ yields $uv(u - v) = 0$, so the line at infinity meets the projective closure in the three distinct points $[1:0:0]$, $[0:1:0]$, and $[1:1:0]$. On that line $\partial F/\partial w$ restricts to $-(u^2 - uv + v^2)$, which equals $-1$ at each of the representatives $(1, 0, 0)$, $(0, 1, 0)$, and $(1, 1, 0)$, so no point at infinity is singular. Two of the other partial derivatives do vanish at points of the line at infinity, namely $\partial F/\partial u$ at $[1:0:0]$ and $\partial F/\partial v$ at $[0:1:0]$, but only the simultaneous vanishing of all three marks a singular point. The restriction $F(u, v, 0) = uv(u - v)$ has three simple roots, and on $w = 0$ the derivatives $\partial F/\partial u$ and $\partial F/\partial v$ restrict to the two partial derivatives of this binary cubic, so a singular point at infinity would be a multiple root of $uv(u - v)$; the line at infinity meets the curve transversally. Setting $w = 1$ in \cref{eq:ghost-partials}, the system $\partial F/\partial u = 2uv - 2u - v^2 + v + 1 = 0$ and $\partial F/\partial v = u^2 - 2uv + u - 2v = 0$ has no solution on the affine curve. Indeed, the line $u = -1$ is disjoint from the curve since $F(-1, v, 1) = -2$. For $u \neq -1$, the second equation gives $v = u/2$; substituting this into the first equation gives $3u^2 - 6u + 4 = 0$, while substituting it into the curve equation gives $u(u^2 - 3u + 4) = 0$, in both cases after clearing a common factor $1/4$. These two equations have no common solution, even over the algebraic closure, since the resultant of $3u^2 - 6u + 4$ and $u(u^2 - 3u + 4)$ equals $112 \neq 0$. Hence the projective closure of the ghost curve is nonsingular. The point $(1, 0)$ lies on the ghost curve, since the left-hand side of \cref{eq:ghost-curve} vanishes there, and the partial derivatives at $(1, 0)$ are $-1$ and $2$, giving tangent line $u = 2v + 1$; substituting into the left-hand side of \cref{eq:ghost-curve} yields $2v^3$, so the tangent has third-order contact at $(1, 0)$.

  The linear change of homogeneous coordinates $(\hat X, \hat Y, \hat Z) = (v, u, u - 2v - w)$ sends $(1, 0, 1)$ and its tangent line to the point and line at infinity in the new coordinates. In these coordinates $F$ becomes $2\hat X^3 + \hat X^2 \hat Z + 3\hat X \hat Y \hat Z - \hat Y^2 \hat Z + \hat Y \hat Z^2$. Dehomogenizing with respect to $\hat Z$ and completing the square in $\hat Y$ yields
  \begin{equation*}
    (2\hat Y - 3\hat X - 1)^2 = 8\hat X^3 + 13\hat X^2 + 6\hat X + 1,
  \end{equation*}
  and the rescaling $X = 8\hat X$, $Y = 8(2\hat Y - 3\hat X - 1)$ produces \cref{eq:ghost-weierstrass}. In homogeneous coordinates, the combined transformation is $[u:v:w] \mapsto [8v:8(u - v + w):u - 2v - w]$. Its matrix has determinant $128$, so it is projective linear, and dehomogenizing it in the affine coordinates used in \cref{eq:ghost-weierstrass-map} gives the displayed rational formulas. The discriminant of \cref{eq:ghost-weierstrass}, formed as in \cite[\S III.1]{silvermanAEC}, equals $-114688$, which is nonzero, so \cref{eq:ghost-weierstrass} is nonsingular; see \cite[Proposition~III.1.4]{silvermanAEC}.

  Finally, the cubic on the right-hand side of \cref{eq:ghost-weierstrass} factors as $(X + 8)(X^2 + 5X + 8)$, and the quadratic factor has discriminant $-7$, so $X = -8$ is the only real root and the right-hand side is nonnegative exactly when $X \ge -8$. The real affine locus of \cref{eq:ghost-weierstrass} is therefore the union of the two graphs $Y = \pm\sqrt{(X + 8)(X^2 + 5X + 8)}$ over $[-8, \infty)$, which meet at $(-8, 0)$. Since a Weierstrass curve has the single point $[0:1:0]$ at infinity by \cite[\S III.1]{silvermanAEC} and both unbounded ends of the affine locus approach it, the real locus of the projective closure of \cref{eq:ghost-weierstrass} is a single closed curve, a topological circle. The transformation above is projective linear with real entries, so it induces a homeomorphism of the real projective plane carrying the real locus of the projective closure of the ghost curve onto this circle; in particular, the former is connected. Under the transformation, the points $[1:0:0]$, $[0:1:0]$, and $[1:1:0]$ at infinity map respectively to $(0, 8)$, $(-4, 4)$, and $(-8, 0)$. Removing these three distinct points from the circle leaves exactly three connected arcs, whose preimages are the connected components of the ghost curve in the affine $(u, v)$-plane, proving the final assertion.
\end{proof}

\section{Arithmetic of rational period-$3$ cycles} \label{sec:period3-arith}

With the period-$3$ cycles described, we prove the main theorem, \cref{thm:period3}, together with two supporting results. The first is elementary: \cref{prop:fixed-z-period3-cubic} shows that for each fixed $z \in \Q$, at most three parameters $c$ place $z$ on a rational period-$3$ cycle. The second is arithmetic: the denominator theorem \cref{prop:ef-transfer} feeds into \cref{prop:period3-rational-sunit}, which makes the period parameter $t$ and the shifted value $t+1$ into $S$-units. Applying \cref{thm:abc-template} to this $S$-unit structure then bounds the height of $t$ in terms of the primes dividing $\den(c)$ under the ABC conjecture, which is \cref{thm:period3}.

We now fix $x_0 = z$ and let $c$ vary. In the $(u,v)$-plane, this restriction is a rational parabola; intersecting it with the period-$3$ relation $v = -1/(1+u)$ of \cref{lem:period3-uv} gives a cubic, equivalent to $x_0(t) = z$ in the parametrization of \cref{prop:period3-classification}. The upper bound ``at most three $c$'' is the cubic degree in $c$ of the period-$3$ dynatomic polynomial; see Silverman \cite[Remark~4.3 and Table~4.1]{silverman2007}. This is the direct mechanism behind \cref{prop:fixed-z-period3-cubic}.

\begin{proposition} \label{prop:fixed-z-period3-cubic}
  Let $z,c \in \Q$. Then $z$ has exact period $3$ for $f_c$ if and only if
  \begin{equation} \label{eq:fixed-z-period3-cubic}
    u^3+2(1-z)u^2+(1-2z)u+1 = 0
  \end{equation}
  with $u = z^2+z+c$. In particular, for each fixed $z \in \Q$, there are at most three parameters $c \in \Q$ such that $z$ has exact period $3$ for $f_c$.
\end{proposition}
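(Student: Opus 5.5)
The plan is to reduce everything to \cref{lem:period3-uv} and the dictionary \cref{thm:dictionary}, working in the $(u,v)$-coordinates with $x_0 = z$ held fixed. Given $z,c$, set $x_1 = f_c(z) = z^2+c$, so that $u = x_0+x_1 = z^2+z+c$ is exactly the quantity in the statement, and set $v = x_0+x_2$. The key identity, taken from the proof of \cref{thm:dictionary}, is $v = u^2+u-2uz$. This follows from $v_0 = u_0^2+u_0-2u_0x_0$ and holds for any orbit, with no condition on $u$. The cubic \cref{eq:fixed-z-period3-cubic} should then be nothing but the period-$3$ relation $(1+u)v = -1$ after substituting this expression for $v$. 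Indeed,
\[
(1+u)(u^2+u-2uz)+1 = u^3+2(1-z)u^2+(1-2z)u+1 .
\]

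For the forward direction, assume $z$ has exact period $3$ for $f_c$. Then \cref{lem:period3-uv} gives $u\notin\{0,-1\}$ and $(1+u)v=-1$. Substituting $v = u^2+u-2uz$ yields \cref{eq:fixed-z-period3-cubic}.

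For the converse, assume $u = z^2+z+c$ satisfies the cubic.
\begin{itemize}
\item First I would check that $u \notin \{0,-1\}$. The constant term of the cubic is $1$, so $u \neq 0$. Evaluating the cubic at $u=-1$ gives $-1+2(1-z)-(1-2z)+1 = 1 \neq 0$, so $u \neq -1$.
\item Put $v = -1/(1+u)$. The cubic then says exactly that $v = u^2+u-2uz$.
\item By \cref{lem:period3-uv-direct}, or equivalently by the iteration in the proof of \cref{prop:period3-classification}, the point $(u,v)$ has exact period $3$ for $G$, and its first coordinates $u_0,u_1,u_2$ are nonzero.
\item Next I would check that the point of $f_{c'}$ reconstructed from $(u,v)$ by \cref{eq:x0-in-uv,eq:c-in-uv} is our pair $(z,c)$. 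From $v = u^2+u-2uz$, \cref{eq:x0-in-uv} returns $x_0 = (u^2+u-v)/(2u) = z$. The parameter from \cref{eq:c-in-uv} equals $x_1 - x_0^2$ with $x_1 = u - x_0$, so $c' = u - z - z^2 = c$.
\item \Cref{thm:dictionary} then transports exact period $3$ back to $z$ under $f_c$.
\end{itemize}

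For the counting claim, fix $z$. The cubic in $u$ is monic, so it has at most three rational roots. Since $c = u - z^2 - z$ is determined by $u$, there are at most three admissible $c$.

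The only delicate step is the converse bookkeeping: one must make sure that the $(u,v)$-point built from the root really corresponds to the given pair $(z,c)$ and not to some other orbit. This is handled by the identity $v = u^2+u-2uz$, which is equivalent to the cubic once $u \neq -1$, together with the excluded values $u \notin \{0,-1\}$ that \cref{thm:dictionary} requires.
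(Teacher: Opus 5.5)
Your proof is correct and is essentially the paper's argument. The paper invokes the packaged parametrization of \cref{prop:period3-classification}, sets $t = u$, and clears denominators in $x_0(u) = z$, whereas you unpack that proposition into its ingredients (\cref{lem:period3-uv}, the identity $v = u^2+u-2uz$, and \cref{thm:dictionary} with \cref{lem:period3-uv-direct}); this is exactly the ``parabola meets $v = -1/(1+u)$'' picture the paper sketches before the statement, and both routes lead to the same cubic $(1+u)(u^2+u-2uz)+1 = 0$.
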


\begin{proof}
  Set $u = x_0+x_1 = z^2+z+c$, so $c = u-z-z^2$. By \cref{prop:period3-classification}, $z$ has exact period $3$ for $f_c$ if and only if $(c, z) = (c(t), x_0(t))$ for a unique $t \in \Q \setminus \{0,-1\}$. From \cref{eq:x0-period3,eq:x1-period3} we read $x_0(t)+x_1(t) = t$, forcing $t = u$. The condition $x_0(u) = z$ then reads
  \begin{equation*}
    \frac{u^3+2u^2+u+1}{2u(u+1)} = z,
  \end{equation*}
  which clears to \cref{eq:fixed-z-period3-cubic}. Conversely, suppose \cref{eq:fixed-z-period3-cubic} holds for $u = z^2+z+c$. Neither $u = 0$ nor $u = -1$ is a root, so $u \in \Q \setminus \{0,-1\}$, and reversing the clearing gives $x_0(u) = z$. Since $x_0(u)+x_1(u) = u$, we have $x_1(u) = u-z$, and hence $c(u) = x_1(u)-x_0(u)^2 = u-z-z^2 = c$. Thus \cref{prop:period3-classification} shows that $z$ has exact period $3$ for $f_c$. The relation $c = u-z-z^2$ is affine in $u$, so each rational root determines a unique $c$; hence at most three.
\end{proof}

The next proposition reformulates the denominator structure of a rational period-$3$ cycle as an $S$-unit equation. The arithmetic content is not new: Eliahou--Fares \cite[Corollary~2.6]{eliahou-fares2022} prove that, for any two distinct periodic points $x \neq y$ of $\varphi_c$ with respective integer numerators $X$ and $Y$ (when written over the common denominator $d$), every prime divisor of $X+Y$ already divides $d$; applied to a period-$3$ cycle, this places both $X_0+X_1$ and $X_0+X_2$ into $\Z[S^{-1}]^\times$. Equivalently, Morton--Raianu \cite[Theorem~2(a)]{mortonraianu2022} compute $\den(c) = 16(mn(m+n)/2)^2$ for $t = m/n$ in lowest terms, which encodes the same constraint by listing the prime factors of $d$ explicitly. We do not reprove these facts. What \cref{prop:period3-rational-sunit} adds is the formulation \cref{eq:period3-rational-sunit} of the constraint as the $S$-unit equation $t/(t+1)+1/(t+1) = 1$, which is the form required for the proof of \cref{thm:period3}. The short proof below uses the period-$3$ identity \cref{lem:period3-uv} to convert the Eliahou--Fares conclusion for the pair $(x_0,x_2)$ into a statement about $t+1$, and so works directly with $t$ and $v$ rather than through the explicit parametrization of \cref{prop:period3-classification}.

\begin{proposition} \label{prop:period3-rational-sunit}
  Let $c \in \Q$ and assume that $f_c$ has a rational cycle of exact period $3$. Write $\den(c) = d^2$ and $S = \supp(d)$. If $t$ is the parameter from \cref{prop:period3-classification}, then
  \begin{equation} \label{eq:period3-sunits}
    t,t+1 \in \Z[S^{-1}]^\times.
  \end{equation}
  In particular,
  \begin{equation} \label{eq:period3-rational-sunit}
    \frac{t}{t+1}+\frac{1}{t+1} = 1
  \end{equation}
  is an $S$-unit equation.
\end{proposition}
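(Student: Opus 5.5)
The plan is to read off both $S$-unit memberships directly from \cref{prop:periodn-sunit} applied with $n = 3$, using \cref{lem:period3-uv} to convert a statement about $x_0+x_2$ into one about $t+1$. Let $(x_0,x_1,x_2)$ be the rational cycle of exact period $3$. By the converse part of \cref{prop:period3-classification}, the parameter is $t = x_0+x_1$, and $t \notin \{0,-1\}$.

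\textbf{Step 1: $t$ is an $S$-unit.} By \cref{prop:periodn-sunit}, every consecutive-point sum $x_j+x_{j+1}$ (indices mod $3$) lies in $\Z[S^{-1}]^\times$. Taking $j = 0$ gives $t = x_0+x_1 \in \Z[S^{-1}]^\times$ immediately.

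\textbf{Step 2: $t+1$ is an $S$-unit.} The same proposition with $j = 2$ gives $x_2+x_3 = x_2+x_0 = v \in \Z[S^{-1}]^\times$. By \cref{lem:period3-uv}, $v = -1/(1+t)$, so $t+1 = -1/v$. The group $\Z[S^{-1}]^\times$ is closed under inversion and contains $-1$, hence $t+1 \in \Z[S^{-1}]^\times$. This is \cref{eq:period3-sunits}.

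\textbf{Step 3: the $S$-unit equation.} Since $t$ and $t+1$ are $S$-units and $\Z[S^{-1}]^\times$ is a group, both $t/(t+1)$ and $1/(t+1)$ lie in $\Z[S^{-1}]^\times$. Their sum is $(t+1)/(t+1) = 1$, so \cref{eq:period3-rational-sunit} is an $S$-unit equation.

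There is no real obstacle here. The only care point is Step 2: one must use the cyclic sum $x_2+x_0$ rather than $x_1+x_2$, so that \cref{lem:period3-uv} applies directly. One should also note that $v \neq 0$ and $t \neq -1$, which holds automatically because $(1+t)v = -1$.
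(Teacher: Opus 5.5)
Your proposal is correct and follows the paper's route: the paper re-derives inline that $x_0+x_1$ and $x_0+x_2$ are $S$-units (writing $x_j = X_j/d$ via \cref{prop:ef-transfer} and using $(X_0+X_1)(X_1+X_2)(X_2+X_0) = d^3$), which is exactly \cref{prop:periodn-sunit} for $n=3$. It then uses \cref{lem:period3-uv} to get $t+1 = -1/v$, just as you do, so citing \cref{prop:periodn-sunit} directly is a legitimate shortcut.
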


\begin{proof}
  Let $(x_0,x_1,x_2)$ be the cycle, indexed so that the parameter $t$ from \cref{prop:period3-classification} is attached to $x_0$. By \cref{prop:ef-transfer}, each $x_j$ has denominator $d$. Hence
  \begin{equation*}
    x_j = \frac{X_j}{d}
  \end{equation*}
  with $X_j \in \Z$ and $\gcd(X_j,d) = 1$ for $j \in \{0,1,2\}$. By \cref{lem:telescoping}, $(x_0+x_1)(x_1+x_2)(x_2+x_0) = 1$. Substituting $x_j = X_j/d$ gives
  \begin{equation*}
    (X_0+X_1)(X_1+X_2)(X_2+X_0) = d^3.
  \end{equation*}
  Hence every prime divisor of each sum $X_i+X_j$ lies in $S$.

  By the converse statement in \cref{prop:period3-classification}, the period parameter is
  \begin{equation*}
    t = x_0+x_1.
  \end{equation*}
  Set $v = x_0+x_2$. Since
  \begin{equation*}
    (x_0+x_1)(x_1+x_2)(x_2+x_0) = 1,
  \end{equation*}
  we have $x_0+x_1 \neq 0$ and $x_0+x_2 \neq 0$. Equivalently,
  \begin{equation*}
    X_0+X_1 \neq 0 \qquad \text{and} \qquad X_0+X_2 \neq 0.
  \end{equation*}
  Therefore
  \begin{equation*}
    t = \frac{X_0+X_1}{d} \qquad \text{and} \qquad v = \frac{X_0+X_2}{d},
  \end{equation*}
  so $t,v \in \Z[S^{-1}]^\times$. By \cref{lem:period3-uv}, $v = -1/(1+t)$. Thus $t+1 = -1/v \in \Z[S^{-1}]^\times$. This gives \cref{eq:period3-sunits}, and \cref{eq:period3-rational-sunit} is immediate.
\end{proof}

We are now ready to prove the main theorem. The argument turns the $S$-unit relation of \cref{prop:period3-rational-sunit} into a quantitative estimate, depending only on that relation and \cref{conj:abc}: since $t$ and $t+1$ are $S$-units, the bound follows by applying the template \cref{thm:abc-template} to the associated coprime triples.

\begin{proof}[Proof of \cref{thm:period3}]
  By \cref{prop:period3-rational-sunit}, applied to the period parameter $t = x_0+x_1$, the quantity $\tau_3 = t$ satisfies \cref{eq:periodn-tau-sunits}. Applying \cref{thm:abc-template} with $n = 3$ and $\tau_3 = t$ gives
  \begin{equation*}
    \max\{|a|,|b|,|a+b|\} \le C_\eps \biggl( \prod_{p\in S} p \biggr)^{1+\eps}
  \end{equation*}
  and
  \begin{equation*}
    h(t) \le (1+\eps)\sum_{p\in S}\log p + O_\eps(1).
  \end{equation*}
  These are exactly \cref{eq:period3-abc,eq:abc-log-height-bound-t}.
\end{proof}

The height bound in \cref{thm:period3} also gives a simple count for fixed denominator support. Write $S = \{p_1,\ldots,p_s\}$. Each $S$-unit has the form $\pm p_1^{e_1}\cdots p_s^{e_s}$ with $e_i \in \Z$, and the condition $h(r) \le H$ implies $|e_i|\log p_i \le H$ for every $i$. Since $2\lfloor H/\log p \rfloor+1 \le (1+2/\log p)(1+H) \le 4(1+H)$ for every prime $p$ and every $H \ge 0$, we obtain
\begin{equation*}
  \#\{r \in \Q : h(r) \le H \text{ and } r \in \Z[S^{-1}]^\times\} \le 2\prod_{p \in S} \biggl(2\biggl\lfloor \frac{H}{\log p} \biggr\rfloor+1\biggr) \le 2 \cdot 4^{\# S}(1+H)^{\# S}.
\end{equation*}
By \cref{eq:period3-abc}, the period parameter of a rational period-$3$ cycle satisfies $h(t) \le \log C_\eps+(1+\eps)\log \rad(d)$, where $\rad(d) = \prod_{p \in S} p$ and $C_\eps$ is the constant of \cref{conj:abc}, enlarged if necessary so that $C_\eps \ge 1$. Taking $H$ to be this bound gives, under \cref{conj:abc},
\begin{align*}
  \#\{c \in \Q : f_c \text{ has a rational } &\text{period-$3$ cycle with } \supp(d) = S\} \\
  &\le 2 \cdot 4^{\# S}\biggl(1+\log C_\eps+(1+\eps)\log \prod_{p \in S} p\biggr)^{\# S}
\end{align*}
for every $\eps > 0$. The count runs over the period parameter $t$ rather than over $c$ directly, which is legitimate because \cref{prop:period3-classification} produces for every such $c$ a parameter $t$ with $c = c(t)$, while \cref{prop:period3-rational-sunit} makes $t$ an $S$-unit and \cref{thm:period3} bounds its height. The two heights are comparable in any case, since $c(t)$ has degree $6$ as a rational function of $t$ and hence $h(c(t)) = 6h(t)+O(1)$. Thus, after the denominator support has been fixed, the conditional height bound leaves only polynomially many admissible $S$-unit parameters, with exponent $\# S$.

We now prove the two unconditional companions to \cref{thm:period3} stated in \cref{sec:main}. Both follow from the same $S$-unit identity \cref{eq:period3-rational-sunit} by replacing \cref{conj:abc} with classical Diophantine machinery. The first, \cref{prop:period3-evertse-count}, bounds the number of admissible parameters $c$ by a function of $\# S$ alone, via Evertse's bound for the $S$-unit equation.

\begin{proof}[Proof of \cref{prop:period3-evertse-count}]
  By \cref{prop:period3-classification}, every such $c$ equals $c(t)$ for some $t \in \Q \setminus \{0, -1\}$, and distinct values of $c$ require distinct $t$; fix one such $t$ for each $c$. By \cref{prop:period3-rational-sunit}, applied with $S_0 = \supp(d)$, we have $t, t+1 \in \Z[S_0^{-1}]^\times \subseteq \Z[S^{-1}]^\times$. Setting $x = 1/(t+1)$ and $y = t/(t+1)$ then gives a solution of
  \begin{equation*}
    x + y = 1
  \end{equation*}
  with $x, y \in \Z[S^{-1}]^\times$, and the pair $(x, y)$ determines $t = y/x$. By Evertse \cite[Theorem~1]{evertse1984}, the number of solutions $(x, y)$ in $S$-units (in the sense of Evertse, with $S$ extended to include the infinite prime, so that $s = \# S + 1$ and the degree parameter is $m = 1$, his field $K$ being $\Q$) is at most $3 \cdot 7^{m + 2s} = 3 \cdot 7^{2\# S + 3}$.
\end{proof}

The second, \cref{prop:period3-unconditional-height}, gives a weak height bound on $t$ via the best known unconditional ABC estimate. 

\begin{proof}[Proof of \cref{prop:period3-unconditional-height}]
  By \cref{prop:period3-rational-sunit}, $t = a/b$ with $\gcd(a, b) = 1$ and all prime divisors of $a$, $b$, $a+b$ lying in $S$, so $\rad(ab(a+b)) \le \prod_{p \in S} p$. The integers $a$, $b$, and $a+b$ are pairwise coprime and nonzero; after reordering according to signs, $|a|$, $|b|$, and $|a+b|$ form an ABC triple in the sense of Stewart--Yu. Applying the unconditional ABC estimate of Stewart--Yu \cite[Theorem~1]{stewart-yu2001} gives
  \begin{align*}
    \max\{|a|, |b|, |a+b|\} &< \exp(C_0 \rad(ab(a+b))^{1/3} (\log \rad(ab(a+b)))^3) \\
    &\le \exp\biggl(C_0 \biggl(\prod_{p \in S} p\biggr)^{1/3} \biggl(\log \prod_{p \in S} p\biggr)^3\biggr).
  \end{align*}
  Taking logarithms and using $h(t) = \log \max\{|a|, |b|\}$ gives \cref{eq:stewart-yu-height-bound-t}.
\end{proof}

\printbibliography

\end{document}